\documentclass[11pt]{article}

\usepackage[a4paper,margin=1.086in]{geometry}
\usepackage{amsmath,amssymb,amsthm,mathtools}
\usepackage{mathrsfs}
\usepackage{enumitem}
\usepackage{hyperref}
\usepackage{microtype}
\usepackage{authblk}
\usepackage{array}
\usepackage{bm}
\usepackage{comment}

\hypersetup{colorlinks=true,linkcolor=blue,citecolor=green,urlcolor=blue}

\newtheorem{theorem}{Theorem}[section]
\newtheorem{proposition}[theorem]{Proposition}
\newtheorem{lemma}[theorem]{Lemma}
\newtheorem{corollary}[theorem]{Corollary}
\theoremstyle{definition}
\newtheorem{definition}[theorem]{Definition}

\theoremstyle{remark}
\newtheorem{remark}[theorem]{Remark}

\usepackage{cleveref}
\crefname{problem}{Problem}{Problems}
\Crefname{problem}{Problem}{Problems}
\crefname{lemma}{Lemma}{Lemmas}
\Crefname{lemma}{Lemma}{Lemmas}
\crefname{proposition}{Proposition}{Propositions}
\Crefname{proposition}{Proposition}{Propositions}
\crefname{theorem}{Theorem}{Theorems}
\Crefname{theorem}{Theorem}{Theorems}
\crefname{corollary}{Corollary}{Corollaries}

\newcommand{\R}{\mathbb{R}}

\newcommand{\ii}{\mathrm{i}}
\newcommand{\diag}{\operatorname{diag}}
\newcommand{\rank}{\operatorname{rank}}

\newcommand{\Ker}{\operatorname{Ker}}
\newcommand{\supp}{\operatorname{supp}}

\newcommand{\norm}[1]{\left\lVert #1\right\rVert}

\newcommand{\wh}{\widehat}
\newcommand{\wt}{\widetilde}
\newcommand{\eps}{\varepsilon}
\newcommand{\cB}{\mathcal B}

\title{Finite Spectral-Band Optimal Control of Acoustic Waves \\ via Subwavelength Point-Like Resonant Actuators}

\author[1]{Arpan Mukherjee}
\author[2]{Mourad Sini }
\affil[1]{MSU-BIT SMBU Joint Research Center of Applied Mathematics, Shenzhen MSU-BIT University, Shenzhen, People's Republic of China}
\affil[2]{Radon Institute for Applied and Computational Mathematics (RICAM), Austrian Academy of Sciences\thanks{The work of M. Sini is partially supported by the Austrian Science Fund (FWF): P 32660 and  P: 36942.}, Linz, Austria}

\date{\today}

\begin{document}
\maketitle

\begin{abstract}
We study the finite-band optimal control of acoustic waves actuated by local clusters of sub-wavelength resonators. The underlying acoustic transmission problem is reduced to a time-domain Foldy--Lax approximation that captures the continuous wave-structure interaction of high-contrast inclusions. Spectral analysis of the associated delayed transfer matrix isolates the principal collective scattering resonances of the clusters. These resonances correspond to weakly damped poles $s_\alpha^\eps=-\gamma_\alpha^\eps+\ii\omega_\alpha^\eps$, with radiation damping $\gamma_\alpha^\eps>0$. Projecting the resulting point-source field onto a finite acoustic spectral band reduces the dynamics to the finite-dimensional coupled system:
\begin{equation*}
\begin{aligned}
        \ddot a+\Lambda a&=C_\eps\eta,\\
        \ddot\eta+2\Gamma_\eps\dot\eta+K_\eps\eta&=u,
\end{aligned}
\end{equation*}
where $a$ denotes the acoustic modal coefficients, $\eta$ represents the collective microstructural states, and $u$ is the control input. This reduced state-space model forms the basis for the subsequent control analysis.
\newline
For a quadratic tracking functional $\mathcal \mathcal{J}_{\mu}$ with regularization parameter $\mu>0$, we prove existence and uniqueness of the optimal control and derive the adjoint optimality system.  The main quantitative contribution is a resonant source-lifting estimate.  If an exact source profile $\eta_r$ is spectrally concentrated in bands $I_\alpha$, then the input
\[
        u_r=(\partial_t^2+2\Gamma_\eps\partial_t+K_\eps)\eta_r
\]
satisfies
\[
        \|u_r\|_{L^2(0,T)}^2
        \le
        \sum_\alpha
        \left(\sup_{\nu\in I_\alpha}
        \left| (\omega_\alpha^\eps)^2+(\gamma_\alpha^\eps)^2-\nu^2
        +2\ii\gamma_\alpha^\eps\nu\right|\right)^2
        \| (\eta_r)_\alpha\|_{\mathcal B_T(I_\alpha)}^2.
\]
This gives the corresponding upper bound for the optimal value.  At exact matching $\nu=\omega_\alpha^\eps$, the multiplier equals $2\gamma_\alpha^\eps\omega_\alpha^\eps+O((\gamma_\alpha^\eps)^3)$; hence local clustering gives a finite resonant gain governed by the real part of the collective pole.  A final section shows that the same small attenuation yields finite-band stabilization, under an explicit modal coupling condition, with decay rate proportional to the cluster-induced damping scale.
\end{abstract}

\noindent

\noindent
\textbf{Keywords.} Wave equation; Scattering resonances; Subwavelength resonators; Foldy-Lax hyperbolic system; Spectral projection; Linear-quadratic optimal control; LaSalle/Huang--Pr\"uss principle; Asymptotic stabilization.

\medskip

\noindent
\textbf{MSC 2020.} 35L05, 35Q93, 49J20, 93B05, 93C20.

\tableofcontents

\section{Introduction}


The optimal control of hyperbolic partial differential equations has been extensively studied because of its theoretical importance and numerous applications in acoustics, vibration control, structural dynamics and wave engineering~\cite{CurtainZwart1995,LasieckaTriggiani2000,Lions1971,Pazy1983}. In many settings, the objective is to determine controls minimizing quadratic performance criteria while steering the system toward a prescribed target or tracking desired trajectories. Depending on the application, the control action may be distributed over a subregion, imposed through the boundary, or concentrated at a finite number of localized devices. Finite-dimensional and finite-band formulations are particularly attractive when only selected modal components are relevant.

\medskip
\noindent
Among localized mechanisms, point or Dirac-type actuators provide a convenient mathematical idealization of highly concentrated sources and have motivated substantial developments in controllability, stabilization and optimization. However, these actuators are typically postulated at the model level rather than derived from an underlying physical mechanism. This observation motivates the present work. We exploit the effective behavior of resonant bubble clusters, whose collective subwavelength dynamics generate localized monopole fields that can be interpreted as internal actuators for the acoustic wave equation. Building on recent asymptotic developments, we investigate the associated finite-band quadratic optimal control problem and quantify the interaction between resonant actuation and optimal control design.

\medskip
\noindent
The control of acoustic waves by small internal devices is usually formulated after an idealization: one prescribes localized sources, boundary inputs, or distributed controls, and then analyzes the corresponding control-to-state map.  For resonant bubbly media there is an additional modelling layer.  The localized sources are not imposed directly; they are generated by the oscillations of small high-contrast bubbles.  Near Minnaert resonance these oscillations are strong, frequency selective, and governed by the multiple scattering between the bubbles.  Hence the actuator is not a static source but a resonant internal system whose parameters are determined by the geometry and material contrast of the bubbles.

\medskip
\noindent
The mathematical modeling of wave propagation in bubbly media has a rich history; see, e.g., the foundational work on effective macroscopic properties \cite{CaflischEtAl1985} and more recent developments in mathematical imaging \cite{DabrowskiGhandricheSini2021}. The underlying acoustic transmission problem can now be rigorously reduced to a time-domain Foldy--Lax system of retarded potentials \cite{MukherjeeSini2023, MukherjeeSini2024, MukherjeeSini2025}. However, while these resonant microstructures act as point-like Dirac actuators \cite{MukherjeeSiniControl2026}, formulating their finite-band optimal control and asymptotic stabilization remains open.

\medskip
\noindent
The starting point of the present work is the time-domain asymptotic description of waves in the presence of finitely many small bubbles.  If $u^\eps$ denotes the acoustic pressure and $u^{\rm in}$ the incident field, then, away from the bubbles and on fixed time intervals, the scattered field is approximated by a retarded Foldy--Lax expansion
\begin{equation}\label{eq:intro-retarded}
        u^\eps(x,t)-u^{\rm in}(x,t)
        =\sum_{i=1}^M G_0(x,z_i)
        q_i^\eps\bigl(t-c_0^{-1}|x-z_i|\bigr)+r^\eps(x,t),
\end{equation}
where the amplitudes $q_i^\eps$ solve a delayed Foldy--Lax system.  The construction used in \cite{MukherjeeSiniControl2026} distributes the bubbles into local clusters: bubbles belonging to the same cluster are close, whereas different clusters are separated at the macroscopic scale.  Consequently, the Laplace-domain Foldy--Lax matrix is dominated by local intra-cluster blocks.  Each block has a principal collective Minnaert pole
\begin{equation}\label{eq:intro-poles}
        s_\alpha^\eps=-\gamma_\alpha^\eps+\ii\omega_\alpha^\eps,
        \qquad \gamma_\alpha^\eps>0,
\end{equation}
which gives one effective collective actuator channel.  The real part $-\gamma_\alpha^\eps$ is small and represents the attenuation created by the clustered scattering mechanism; it is not inserted as an independent damping assumption.

\medskip
\noindent
After projecting the field generated by these collective channels onto a finite acoustic spectral band, one obtains the finite-dimensional wave--actuator system. This finite-band restriction preserves the uniform control properties of the low-frequency dynamics, which are typically lost in unfiltered discrete wave approximations \cite{Zuazua2005}, yielding the coupled system
\begin{equation}\label{eq:intro-main-model}
\begin{aligned}
        \ddot a+\Lambda a&=C_\eps\eta,\\
        \ddot\eta+2\Gamma_\eps\dot\eta+K_\eps\eta&=u .
\end{aligned}
\tag{M}
\end{equation}
Here $a(t)\in\R^n$ are the selected acoustic modal coefficients, $\eta(t)\in\R^{N_c}$ are the principal collective outputs of the local bubble clusters, and $u(t)\in\R^{N_c}$ is the input applied to those collective channels.  The matrices in the second equation are
\begin{equation}\label{eq:intro-GammaK}
        \Gamma_\eps=\diag(\gamma_1^\eps,\ldots,\gamma_{N_c}^\eps),
        \qquad
        K_\eps=\diag\big((\omega_\alpha^\eps)^2+(\gamma_\alpha^\eps)^2\big)_{\alpha=1}^{N_c}.
\end{equation}
The diagonal form in \eqref{eq:intro-GammaK} is obtained only after projection onto the isolated principal collective pole of each local Foldy--Lax block.  The microscopic Foldy--Lax system is not diagonal in the individual bubble amplitudes: it contains the full intra-cluster interaction matrix.  The diagonal matrices in \eqref{eq:intro-GammaK} therefore describe a principal-pole coordinate system, not a microscopic decoupling assumption.  The coupling between the acoustic band and the selected collective channels remains encoded in the generally full matrix $C_\eps$.  This reduction is recalled in detail in Section~\ref{sec:clusters-poles}.

\medskip
\noindent
For comparison, if localized sources at macroscopic cluster centers $y_\alpha$ were available directly, the finite-band model would be
\begin{equation}\label{eq:intro-direct}
        \ddot a+\Lambda a=C_y q,
        \qquad
        (C_y)_{k\alpha}=c_0^2\psi_k(y_\alpha),
\end{equation}
where $q$ is the prescribed point-source strength.  This direct-source viewpoint is the first step in \cite{MukherjeeSiniControl2026}, where the source $q$ is subsequently realized by resonant bubble clusters through a right-inversion argument.  The present paper addresses a complementary question.  The source is the output $\eta$ of the collective resonant channels, while the quantity paid for in the control cost is the input $u$.  Thus the relevant map is no longer $q\mapsto a$ alone, but
\begin{equation}\label{eq:intro-input-output}
        u\longmapsto \eta\longmapsto a.
\end{equation}
The cost reduction caused by resonance is measured through the differential relation
\begin{equation}\label{eq:intro-source-lift}
        u=(\partial_t^2+2\Gamma_\eps\partial_t+K_\eps)\eta.
\end{equation}
For a harmonic component of frequency $\nu$, the multiplier in \eqref{eq:intro-source-lift} is
\begin{equation}\label{eq:intro-main-d}
        d_{\alpha,\eps}(\nu)=
        \left| (\omega_\alpha^\eps)^2+(\gamma_\alpha^\eps)^2-\nu^2
        +2\ii\gamma_\alpha^\eps\nu\right| .
\end{equation}
At frequency matching $\nu=\omega_\alpha^\eps$, this multiplier is of size $2\gamma_\alpha^\eps\omega_\alpha^\eps$ up to higher order terms.  Hence the collective pole produces a finite resonant gain: the cost does not vanish at resonance, but it is small when the cluster-induced damping is small.

\medskip
\noindent
Let $\mathcal R\in\R^{m\times n}$ be an observation matrix and let $a_d\in L^2(0,T;\R^m)$ be a desired finite-band trajectory.  We study the quadratic tracking functional
\begin{equation}\label{eq:intro-functional}
        \mathcal \mathcal{J}_{\mu}(u)=
        \frac12\int_0^T\|\mathcal{R}a(t;u)-a_d(t)\|^2\,dt
        +\frac{\mu}{2}\int_0^T\|u(t)\|^2\,dt,
        \qquad \mu>0,
\end{equation}
subject to \eqref{eq:intro-main-model}.  The parameter $\alpha$ measures the compromise between tracking accuracy and actuator effort.  Resonant tuning improves this compromise because it reduces the norm of the input $u$ required to produce a prescribed effective source output $\eta$. The matrix $\mathcal R \in \mathbb{R}^{m \times n}$ represents the observation (or output) operator for the reduced acoustic system. If
\[
a(t) = (a_1(t),\dots,a_n(t))^\top
\]
denotes the vector of modal coefficients, then the observed quantity is
\[
y(t) =\mathcal R a(t) \in \mathbb{R}^m.
\]
Accordingly, the quadratic cost functional measures the discrepancy between the observed output and a prescribed target trajectory $a_d(t)$ rather than necessarily comparing the full state.

\medskip
\noindent
Several natural choices of the observation matrix arise in practice:
\begin{enumerate}
    \item \textbf{Full-state observation:} Choosing
    \[
   \mathcal R = I_n,
    \]
    where $I_n$ is the identity matrix, yields
    \[
    y(t) = a(t),
    \]
    so that all retained modal amplitudes are tracked simultaneously.

    \item \textbf{Projection onto selected modes:} If only a subset of the modes is of interest, $R$ may be chosen as a coordinate projection that extracts the corresponding components of $a(t)$.

    \item \textbf{Weighted observations:} A diagonal matrix
    \[
    \mathcal R = \operatorname{diag}(w_1,\ldots,w_n)
    \]
    assigns different relative importance to different modes in the tracking objective.

    \item \textbf{Pointwise measurements:} If the acoustic field is observed at spatial locations
    $x_1,\ldots,x_m$, then
    \[
    p(x,t)=\sum_{k=1}^{n} a_k(t)\psi_k(x),
    \]
    and the corresponding observation matrix is
    \[
    \mathcal R_{ik}=\psi_k(x_i),
    \]
    since
    \[
    p(x_i,t)=\sum_{k=1}^{n}\psi_k(x_i)a_k(t).
    \]

    \item \textbf{Regional averages:} More generally, if observations correspond to averages over
    subdomains $\omega_i$, then
    \[
    \mathcal R_{ik}=\int_{\omega_i}\psi_k(x)\,dx,
    \]
    so that each output component represents a spatially averaged measurement.
\end{enumerate}

\noindent
Thus, $\mathcal R$ provides a flexible mechanism for encoding the quantities that are actually observed or required to follow a desired trajectory, ranging from complete modal tracking to localized sensor measurements.

\medskip
\noindent
The main results of this work are: existence and uniqueness of the minimizer of \eqref{eq:intro-functional}; a Pontryagin optimality system for the optimal pair; a source-lifting estimate in terms of the complex-pole multipliers \eqref{eq:intro-main-d}; the corresponding upper bound for the optimal value; and a finite-band stabilization result generated by the small real parts $\gamma_\alpha^\eps$ under an explicit modal coupling condition.  The theorem below records the estimates that will be proved in the sequel.  The notation $\mathcal B_T(I)$ is defined in \Cref{def:BT}.

\medskip
\noindent 
In \cite{MukherjeeSiniControl2026}, the central idea is to show that resonant bubble clusters can approximate prescribed effective Dirac source profiles on finite spectral bands.  Here the effective resonant channels are incorporated into an optimal-control problem: the desired acoustic output is prescribed, the collective source amplitude $\eta$ is an internal variable, and the paid input is the forcing $u$ needed to drive the resonant channels.  The contribution is therefore a quantitative analysis of how the collective pole locations of the bubble clusters enter the quadratic control cost, the detuning estimates, and the finite-band stabilization mechanism.  This distinction is essential for applications, since it identifies not only which source profiles can be produced, but also how efficiently they can be produced through the physical resonant actuator dynamics.

\medskip
\noindent
In this form, the analysis separates four aspects that are often combined in idealized formulations: the finite-band acoustic dynamics, the collective resonant actuator dynamics, the spatial modal coupling matrix, and the frequency-dependent input cost.  This separation makes the role of resonance explicit while keeping the control problem finite-dimensional and fully quantitative.

\begin{theorem}[Main finite-band control and resonant cost estimate]\label{thm:intro-main}
Let $a_d\in L^2(0,T;\R^m)$ and let
\[
        x_0=(a(0),\dot a(0),\eta(0),\dot\eta(0))\in\R^{2n+2N_c}.
\]
\begin{enumerate}[label=(\roman*)]
\item For every $\mu>0$ there is a unique minimizer
\[
        u_{\mu}^{*}\in L^2(0,T;\R^{N_c})
\]
of \eqref{eq:intro-functional} subject to \eqref{eq:intro-main-model}.  It is characterized by the adjoint system \eqref{eq:adjoint} and the stationarity relation
\[
        \alpha u_{\mu}^{*}+p_\zeta=0 .
\]
\item Let $a_r\in H^2(0,T;\R^n)$ be a compatible finite-band reference, i.e. each of its components is in $B_T(I)$, see Definition \ref{def:BT}, and suppose $\rank C_\eps=n$.  Let
\[
        \eta_r=C_\eps^\dagger(\ddot a_r+\Lambda a_r),
        \qquad
        u_r=(\partial_t^2+2\Gamma_\eps\partial_t+K_\eps)\eta_r.
\]
Then
\begin{equation}\label{eq:intro-main-source}
        \|u_r\|_{L^2(0,T)}^2
        \le
        \sum_{\alpha=1}^{N_c}
        \left(\sup_{\nu\in I_\alpha}d_{\alpha,\eps}(\nu)\right)^2
        \|(\eta_r)_\alpha\|_{\mathcal B_T(I_\alpha)}^2,
\end{equation}
where $d_{\alpha,\eps}(\nu)$ is defined in \eqref{eq:intro-main-d}.
\item If $x_0=0$ and $a_d=\mathcal Ra_r$, then the optimal value
\[
        V_\mu(a_d;0):=\inf_u\mathcal \mathcal{J}_{\mu}(u)
\]
satisfies
\begin{equation}\label{eq:intro-main-value}
        V_\mu(a_d;0)
        \le
        \frac{\mu}{2}
        \sum_{\alpha'=1}^{N_c}
        \left(\sup_{\nu\in I_{\alpha'}}d_{\alpha',\eps}(\nu)\right)^2
        \|(\eta_r)_{\alpha'}\|_{\mathcal B_T(I_{\alpha'})}^2.
\end{equation}
\item At exact frequency matching,
\begin{equation}\label{eq:intro-main-match}
        d_{\alpha,\eps}(\omega_\alpha^\eps)
        =2\gamma_\alpha^\eps\omega_\alpha^\eps+O((\gamma_\alpha^\eps)^3).
\end{equation}
Thus, when $\gamma_\alpha^\eps=O(\eps)$, the resonator-mediated input energy needed to generate a matched source is $O(\eps^2)$ relative to the squared source output.
\end{enumerate}
\end{theorem}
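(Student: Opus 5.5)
We prove the four parts in order, since each later one uses the earlier.

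For (i), write \eqref{eq:intro-main-model} as a first-order system $\dot x=Ax+Bu$, $x(0)=x_0$, with $x=(a,\dot a,\eta,\dot\eta)\in\R^{2n+2N_c}$ and $B=(0,0,0,I)^\top$. By variation of constants, $u\mapsto x(\cdot;u)=e^{tA}x_0+(Lu)(t)$ is affine, where $L:L^2(0,T;\R^{N_c})\to C([0,T];\R^{2n+2N_c})$ is bounded and linear. The first term of $\mathcal J_\mu$ is therefore a convex continuous quadratic in $u$, and the second term $\tfrac\mu2\|u\|^2$ makes $\mathcal J_\mu$ strictly convex and coercive. A minimizing sequence is then bounded, and a weak limit minimizes, because a convex continuous functional is weakly lower semicontinuous; strict convexity gives uniqueness. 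For the optimality system, compute the Gâteaux derivative $\mathcal J_\mu'(u)v=\int_0^T\langle \mathcal Ra-a_d,\mathcal R\,(Lv)_a\rangle+\mu\langle u,v\rangle\,dt$. Then introduce the adjoint $-\dot p=A^\top p+(\mathcal R^\top(\mathcal Ra-a_d),0,0,0)$, $p(T)=0$. Integrating by parts in time turns the first term into $\int\langle B^\top p,v\rangle$. With $p_\zeta=B^\top p$, the last ($\dot\eta$) component, the stationarity relation reads $\mu u^*_\mu+p_\zeta=0$; the statement writes this coefficient as $\alpha$, which is a typo for $\mu$. This is exactly \eqref{eq:adjoint}.

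For (ii), we first check that $(a_r,\eta_r)$ solves the first equation. Since $\rank C_\eps=n$, the matrix $C_\eps C_\eps^\dagger$ is the identity on $\R^n$, so $C_\eps\eta_r=\ddot a_r+\Lambda a_r$. Because $\Gamma_\eps$ and $K_\eps$ are diagonal, $u_r$ decouples componentwise: $(u_r)_\alpha=P_\alpha(\partial_t)(\eta_r)_\alpha$ with $P_\alpha(\ii\nu)=(\omega_\alpha^\eps)^2+(\gamma_\alpha^\eps)^2-\nu^2+2\ii\gamma_\alpha^\eps\nu$. The key step is a multiplier bound. We will use \Cref{def:BT}, where we expect $\|f\|_{\mathcal B_T(I)}$ to be an $L^2$ norm of a spectral representation $f(t)=\int_I \hat f(\nu)e^{\ii\nu t}\,d\nu$ (or a finite sum over $\nu\in I$). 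That norm should control $\|g(\partial_t)f\|_{L^2(0,T)}\le \sup_I|g(\ii\nu)|\,\|f\|_{\mathcal B_T(I)}$ by Plancherel or Parseval. Summing the squares over $\alpha$ then gives \eqref{eq:intro-main-source}.

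\textbf{Main obstacle.} This multiplier step is the hardest part, and how hard it is depends on the precise form of \Cref{def:BT}. On a finite interval, Plancherel is only an inequality, so the definition must be the one that makes this bound hold, presumably as an infimum over representations. A second issue is that the spectral band of $(\eta_r)_\alpha$ must lie in $I_\alpha$. We need to verify that applying $C_\eps^\dagger$ and $\partial_t^2+\Lambda$ preserves the band of $a_r$, since both are constant-coefficient operations; this is why "compatible" is assumed.

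For (iii), the pair $(a_r,\eta_r)$ must be admissible from zero data. That requires $a_r$ and $\eta_r$ to have vanishing initial data; we will either read this into compatibility or assume it. Then $u=u_r$ produces $a=a_r$ by uniqueness of solutions, so the tracking term is $\|\mathcal Ra_r-a_d\|=0$. Hence $V_\mu\le\mathcal J_\mu(u_r)=\tfrac\mu2\|u_r\|^2$, and (ii) gives \eqref{eq:intro-main-value}.

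For (iv), setting $\nu=\omega_\alpha^\eps$ makes the real part of the multiplier $(\gamma_\alpha^\eps)^2$. Thus
\[
d_{\alpha,\eps}=\sqrt{4\gamma^2\omega^2+\gamma^4}=2\gamma\omega\sqrt{1+\gamma^2/(4\omega^2)}=2\gamma\omega+O(\gamma^3),
\]
where $\gamma=\gamma_\alpha^\eps$ and $\omega=\omega_\alpha^\eps$, and the $O(\gamma^3)$ is uniform because $\omega_\alpha^\eps$ is bounded away from zero. Squaring, the input energy is $O(\eps^2)$ times the source output when $\gamma=O(\eps)$.
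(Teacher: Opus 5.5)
Your proposal is correct and follows essentially the paper's argument in all four parts:
\begin{itemize}
\item (i) uses variation of constants plus strict convexity and coercivity from the $\mu$-term;
\item (ii) uses an $L^2(\R)$ extension with Fourier support in $I_\alpha\cup(-I_\alpha)$, then Plancherel, restriction to $(0,T)$ and an infimum over extensions, which is exactly how Definition~\ref{def:BT} is set up, with the evenness of $d_{\alpha,\eps}$ handling $-I_\alpha$;
\item (iii) tests $\mathcal J_\mu$ with $u_r$;
\item (iv) is the identity $d_{\alpha,\eps}(\omega_\alpha^\eps)=\gamma_\alpha^\eps\sqrt{(\gamma_\alpha^\eps)^2+4(\omega_\alpha^\eps)^2}$.
\end{itemize}
The only deviation is that you derive the adjoint system from the G\^ateaux derivative and an integration by parts rather than quoting Pontryagin's principle; by convexity this also gives sufficiency, hence a genuine characterization. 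Your remarks are both correct: $\alpha$ should read $\mu$, and (iii) needs $a_r(0)=\dot a_r(0)=0$ and $\eta_r(0)=\dot\eta_r(0)=0$, which is what the paper's unspecified ``compatibility conditions for exact tracking'' in \Cref{thm:optimal-upper} must encode.
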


\noindent The significance of the above results lies in the explicit connection it establishes between the spectral properties of the resonant actuators and the resulting control cost. Indeed, parts~(ii)--(iv) show that the effort required to generate a prescribed effective source
is governed by the complex-pole multipliers associated with the collective resonances of the
bubble clusters. In particular, near resonance the required control energy is proportional to the
small damping parameter through
\[
d_{\alpha,\varepsilon}(\omega_\alpha^\varepsilon)
    = 2\gamma_\alpha^\varepsilon \omega_\alpha^\varepsilon
      + O\!\left((\gamma_\alpha^\varepsilon)^3\right),
\]
demonstrating quantitatively that resonant tuning reduces the actuator effort needed to produce a desired acoustic response, see Remark \ref{Remark-poles} and Remark \ref{Remark-poles-2} for a more detailed discussion. Thus, taken together, these results show that the present framework extends the standard finite-dimensional quadratic tracking setting by explicitly incorporating the spectral characteristics of the resonant actuators into the control cost. This provides a rigorous bridge between the physics of resonant microstructured actuators and optimal control theory, showing how the location of the collective resonances directly influences tracking performance, control cost, and the efficiency with which desired finite-band wave fields can be generated.
\bigskip

\noindent The remainder of the paper is organized as follows. In Section \ref{sec:model}, we derive the finite-dimensional resonant control model from the effective description of the bubble clusters and introduce the coupled acoustic--actuator dynamics that form the basis of our analysis. Section \ref{sec:LQ} formulates the quadratic tracking problem and establishes the existence, uniqueness, and first-order optimality conditions for the corresponding minimizer. In Section \ref{subsec:detuning}, we analyze the exact source-lifting problem and derive quantitative estimates showing how the resonant pole structure influences the control effort through the associated frequency-dependent multipliers. Section \ref{subsec:cost} applies these estimates to obtain upper bounds for the optimal value function and compares the cost of direct source realization with that of resonator-mediated actuation. Section \ref{sec:stabilization} is devoted to the stabilization properties of the coupled system, where we show that the damping induced by the resonant clusters yields exponential decay under a suitable modal coupling condition. Finally, Section \ref{con} summarizes the main conclusions and discusses perspectives for future work.

\section{Spectral reduction to the finite-dimensional state-space model}\label{sec:model}

This section recalls, in the notation needed below, the reduction from the acoustic transmission problem with bubbles to the finite-dimensional system \eqref{eq:intro-main-model}.  The time-domain Foldy--Lax expansion and the pole analysis are taken from \cite{MukherjeeSini2023,MukherjeeSini2024,MukherjeeSini2025,MukherjeeSiniControl2026}.  In particular, the principal pole structure used below is stated in Proposition 2.1 and proved in Section 5 of \cite{MukherjeeSiniControl2026}; the band-limited right-inversion result is Proposition 2.4 of \cite{MukherjeeSiniControl2026}.

\subsection{The acoustic transmission problem and delayed Foldy–Lax asymptotics}
Let
\[
        D_i^\eps=z_i+\eps B_i,
        \qquad i=1,\ldots,M,
        \qquad
        D^\eps=\bigcup_{i=1}^M D_i^\eps,
\]
be a finite family of small gas bubbles.  Outside $D^\eps$ the parameters are $(\rho_c,\kappa_c)$; inside $D_i^\eps$ they are $(\rho_{b,i}^\eps,\kappa_{b,i}^\eps)$ in the Minnaert scaling used in the time-domain bubble expansions.  The pressure $u^\eps$ solves
\begin{equation}\label{eq:transmission}
        \kappa(x)^{-1}\partial_t^2u^\eps
        -\nabla\cdot\big(\rho(x)^{-1}\nabla u^\eps\big)=F
        \quad\hbox{in }(\mathbb R^3\setminus\partial D^\eps)\times(0,T),
\end{equation}
with continuity of pressure and normal velocity across each $\partial D_i^\eps$, outgoing radiation, and prescribed incident field $u^{\rm in}$ generated by $F$ in the homogeneous background.  Let $c_0=\sqrt{\kappa_c/\rho_c}$ and
\[
        G_0(x,z)=\frac{1}{4\pi |x-z|}.
\]
On compact observation sets separated from the bubbles and for fixed $T$, the scattered field admits the retarded monopole expansion
\begin{equation}\label{eq:time-monopoles}
        u^\eps(x,t)-u^{\rm in}(x,t)
        =\sum_{i=1}^M
        G_0(x,z_i)
        q_i^\eps\bigl(t-c_0^{-1}|x-z_i|\bigr)
        +r^\eps(x,t),
\end{equation}
where $q_i^\eps$ are scalar monopole amplitudes and $r^\eps$ is the asymptotic remainder.

\medskip
\noindent
In the normalization used in \cite{MukherjeeSiniControl2026}, the amplitudes satisfy a delayed Foldy--Lax system of the form
\begin{equation}\label{eq:FL-time-full}
        D_i(\partial_t)q_i^\eps(t)
        +\sum_{j\ne i}q_{ij}^\eps\,
        \partial_t^2 q_j^\eps(t-\tau_{ij})
        =\partial_t^2 f_i(t),
        \qquad i=1,\ldots,M,
\end{equation}
where
\begin{equation}\label{eq:FL-time-terms}
        D_i(\partial_t):=1+\omega_{M,i}^{-2}\partial_t^2,
        \qquad
        \tau_{ij}:=c_0^{-1}|z_i-z_j|,
        \qquad
        q_{ij}^\eps:=\frac{\eps C_j}{4\pi |z_i-z_j|},\quad i\ne j.
\end{equation}
Here $f_i(t)$ denotes the incident trace at $z_i$ after the input normalization.  Thus the time derivative in the interaction term acts only once, namely on the delayed amplitude $q_j^\eps(t-\tau_{ij})$; the coefficient $q_{ij}^\eps$ is a scalar interaction strength and contains no additional time differentiation.  Equivalently,
\begin{equation}\label{eq:FL-time-expanded}
        \left(1+\omega_{M,i}^{-2}\partial_t^2\right)q_i^\eps(t)
        +\sum_{j\ne i}
        \frac{\eps C_j}{4\pi |z_i-z_j|}
        \partial_t^2 q_j^\eps(t-\tau_{ij})
        =\partial_t^2 f_i(t).
\end{equation}
The precise constants depend on the chosen amplitude normalization; the form \eqref{eq:FL-time-expanded} is the one needed here, namely a Minnaert local factor plus delayed Foldy--Lax interactions.

\subsection{Resolvent formulation and the block-dominant transfer matrix}
Let $\wh q^\eps(s)$ and $\wh f(s)$ be Laplace transforms.  Equation \eqref{eq:FL-time-expanded} gives
\begin{equation}\label{eq:FL-Laplace-full}
        \Big(D(s)+s^2Q^\eps(s)\Big)\wh q^\eps(s)=s^2\wh f(s),
\end{equation}
where
\begin{equation}\label{eq:DQ-defs}
        D(s)=\diag\left(1+\frac{s^2}{\omega_{M,i}^2}\right)_{i=1}^M,
        \qquad
        Q_{ij}^\eps(s)=
        \begin{cases}
        \displaystyle \frac{\eps C_j}{4\pi |z_i-z_j|}
        e^{-s|z_i-z_j|/c_0},& i\ne j,\\[2mm]
        0,&i=j.
        \end{cases}
\end{equation}
Equivalently,
\begin{equation}\label{eq:Kb-def}
        \wh q^\eps(s)=H_b^\eps(s)\wh f(s),
        \qquad
        H_b^\eps(s):=\Big(D(s)+s^2Q^\eps(s)\Big)^{-1}s^2.
\end{equation}
With a general transducer-to-bubble trace matrix $G_{\rm tr}(s)$, one writes instead
\begin{equation}\label{eq:full-transfer}
        \wh q^\eps(s)=H_b^\eps(s)G_{\rm tr}(s)\wh\lambda(s).
\end{equation}
The bubbles are grouped into local clusters
\begin{equation}\label{eq:clusters}
        \{1,\ldots,M\}=\bigsqcup_{\alpha=1}^{N_c}I_\alpha.
\end{equation}
The cluster output is
\begin{equation}\label{eq:cluster-output}
        Q_\alpha^\eps(t)=\sum_{i\in I_\alpha}q_i^\eps(t),
        \qquad
        Q^\eps=B_{\rm out}q^\eps,
\end{equation}
where
\begin{equation}\label{eq:Bout}
        (B_{\rm out}q)_\alpha:=\sum_{i\in I_\alpha}q_i.
\end{equation}
The cluster transfer matrix is therefore
\begin{equation}\label{eq:Hcl-def}
        H_{\rm cl}^\eps(s):=B_{\rm out}H_b^\eps(s)G_{\rm tr}(s),
        \qquad
        \wh Q^\eps(s)=H_{\rm cl}^\eps(s)\wh\lambda(s).
\end{equation}

\subsection{Spectral isolation of principal Minnaert poles and collective channels}\label{sec:clusters-poles}
We now explain why the internal part of the reduced model can be written in diagonal form, although the microscopic Foldy--Lax system is fully coupled inside each cluster.  This point is important: the diagonal matrices used later are not imposed at the bubble level; they are obtained after selecting one isolated collective pole from each local Foldy--Lax block.

\medskip
\noindent
For $i,j\in I_\alpha$, $i\ne j$, the construction in \cite{MukherjeeSiniControl2026} uses close intra-cluster distances of order $\eps^p$, $0<p<1$, while $|z_i-z_j|$ is of order one when $i$ and $j$ belong to distinct clusters.  After reordering the bubbles by clusters, the Laplace-domain Foldy--Lax pencil
\begin{equation}\label{eq:FL-pencil}
        A^\eps(s):=D(s)+s^2Q^\eps(s)
\end{equation}
therefore has the block-dominant structure
\begin{equation}\label{eq:block-structure}
        A^\eps(s)=
        \operatorname{diag}\big(A_1^\eps(s),\ldots,A_{N_c}^\eps(s)\big)
        +E_{\rm inter}^\eps(s).
\end{equation}
Here $A_\alpha^\eps(s)$ is the restriction of $A^\eps(s)$ to the bubbles of $I_\alpha$, while $E_{\rm inter}^\eps(s)$ contains the interactions between distinct clusters.  The blocks $A_\alpha^\eps(s)$ are not diagonal: their off-diagonal entries contain the strong local interactions between the bubbles of the same cluster.  The inter-cluster part is lower order in the regime considered in \cite{MukherjeeSiniControl2026} and is treated perturbatively in the pole analysis.

\medskip
\noindent
Proposition 2.1 of \cite{MukherjeeSiniControl2026} gives the following spectral picture for the microscopic transfer matrix.  In the relevant right half-plane, $H_b^\eps(s)$ is meromorphic and its poles are contained in the zero set of
\begin{equation}\label{eq:pole-equation}
        \det A^\eps(s)=0.
\end{equation}
For each local cluster $I_\alpha$ there is a distinguished simple pole
\begin{equation}\label{eq:pole}
        s_\alpha^\eps=-\gamma_\alpha^\eps+\ii\omega_\alpha^\eps,
        \qquad \gamma_\alpha^\eps>0,
\end{equation}
near the corresponding Minnaert frequency.  This pole is separated from the other local poles by the asymptotic cluster gap proved in \cite{MukherjeeSiniControl2026}; moreover, different principal cluster poles are separated at the macroscopic scale when the corresponding Minnaert frequencies are distinct.  The complex conjugate pole $\overline{s_\alpha^\eps}$ is used as well when real-valued time-domain signals are reconstructed.

\medskip
\noindent
Let $r_\alpha^\eps$ and $l_\alpha^\eps$ denote the right and left principal resonant vectors associated with the pole $s_\alpha^\eps$, localized on the cluster $I_\alpha$ and normalized so that the corresponding residue is non-zero.  For a simple pole of the matrix pencil $A^\eps(s)$, the leading singular part of the microscopic transfer matrix has the exact rank-one form
\begin{equation}\label{eq:mic-residue}
        H_b^\eps(s)
        =
        \frac{r_\alpha^\eps\, l_\alpha^\eps{}^*}{s-s_\alpha^\eps}
        \,\kappa_\alpha^\eps
        +H_{\rm reg,\alpha}^\eps(s)
        \qquad\text{near }s_\alpha^\eps,
\end{equation}
where $H_{\rm reg,\alpha}^\varepsilon(s)$ is holomorphic. The scalar factor $\kappa_\alpha^\varepsilon \in \mathbb{C}$ is the generalized derivative trace, defined exactly by the first-order Taylor expansion of the pencil at the pole: 
$$\kappa_\alpha^\varepsilon = \big( l_\alpha^\varepsilon{}^* \frac{\partial A^\varepsilon}{\partial s} \big|_{s_\alpha^\varepsilon} r_\alpha^\varepsilon \big)^{-1}.$$
\noindent
Equivalently, after applying the cluster-output map $B_{\rm out}$ and the transducer trace map $G_{\rm tr}$, the cluster transfer matrix admits the principal expansion
\begin{equation}\label{eq:pole-expansion}
        H_{\rm cl}^\eps(s)=
        \sum_{\alpha=1}^{N_c}
        \frac{b_\alpha^\eps\ell_\alpha^\eps{}^*}{s-s_\alpha^\eps}
        +H_{\rm reg}^\eps(s),
\end{equation}
where $H_{\rm reg}^\eps$ is holomorphic in the corresponding region.  The vectors $b_\alpha^\eps$ and $\ell_\alpha^\eps$ are the projected right and left principal cluster modes; they include the effects of $B_{\rm out}$, the transducers, and the local residue.  Thus the selected actuator channel is a collective mode of the whole cluster, not the motion of a single bubble.

\medskip
\noindent
The principal collective coordinate can be represented, at leading order, by testing the microscopic monopole vector on the corresponding resonant mode.  In time-domain notation we write
\begin{equation}\label{eq:eta-def}
        \eta_\alpha(t)
       :=
        \sum_{i\in I_\alpha}\ell_{\alpha i}^\eps q_i^\eps(t).
\end{equation}
\noindent
The exact geometric normalization is absorbed into the definition of the channel input $u_\alpha$ and into the coupling coefficient appearing in $C_\varepsilon$ below. The rigorous reduction to a finite-dimensional second-order relation on a finite acoustic band is established by the following proposition.

\begin{proposition}
    Let $H_{\rm cl, \alpha}^\varepsilon(s)$ be the exact meromorphic transfer function of the isolated cluster channel $I_\alpha$ on an open domain $\Omega \subset \mathbb{C}$. Then, uniformly on any compact acoustic spectral band $K \subset \ii\mathbb{R}$ enclosing the resonance, the transfer relation admits the second-order canonical representation:
    \begin{equation}\label{eq:theorem-transfer}
        H_{\rm cl, \alpha}^\varepsilon(s) = \frac{\beta_\alpha^\varepsilon}{(s+\gamma_\alpha^\varepsilon)^2 + (\omega_\alpha^\varepsilon)^2}\left(1 + \mathcal{O}(\varepsilon^{1-p})\right) \quad \text{as } \varepsilon \to 0,
    \end{equation}
where $\beta_\alpha^\varepsilon \in \mathbb{R} \setminus \{0\}$ is a real-valued input-output coupling coefficient satisfying satisfying $\lim_{\varepsilon \to 0}\beta_\alpha^\varepsilon = -\omega_{M,\alpha}^2 \rho_\alpha^{(0)}$, with $\rho_\alpha^{(0)} := \big(B_{\rm out} r_\alpha^{(0)}\big) \big(l_\alpha^{(0)}{}^* G_{\rm tr}\big)$.
\end{proposition}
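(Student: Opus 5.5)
The plan is to start from the exact rank-one residue decomposition \eqref{eq:mic-residue} and from the principal-pole expansion \eqref{eq:pole-expansion}, restricted to the channel $\alpha$. The real time-domain signals force the conjugate pole $\overline{s_\alpha^\eps}$ into the picture as well. The idea is to combine the two principal poles into a single real second-order factor and to show that every other contribution is a relative $O(\eps^{1-p})$ perturbation on the band $K$.

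\emph{Step 1 (conjugate pairing).} Because $A^\eps(\bar s)=\overline{A^\eps(s)}$ (all coefficients in \eqref{eq:DQ-defs} are real), the poles of $H_{\rm cl,\alpha}^\eps$ come in pairs $s_\alpha^\eps,\overline{s_\alpha^\eps}$, with conjugate residues $R_\alpha$ and $\overline{R_\alpha}$. Write
\[
H_{\rm cl,\alpha}^\eps(s)=\frac{R_\alpha}{s-s_\alpha^\eps}+\frac{\overline{R_\alpha}}{s-\overline{s_\alpha^\eps}}+H_{\rm reg,\alpha}^\eps(s).
\]
The first two terms sum to $\bigl(2\operatorname{Re}R_\alpha\,(s+\gamma_\alpha^\eps)-2\omega_\alpha^\eps\operatorname{Im}R_\alpha\bigr)/\bigl((s+\gamma_\alpha^\eps)^2+(\omega_\alpha^\eps)^2\bigr)$.

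\emph{Step 2 (leading-order residue).} Next I compute $R_\alpha=\kappa_\alpha^\eps\,(B_{\rm out}r_\alpha^\eps)(l_\alpha^{\eps*}G_{\rm tr})\,(s_\alpha^\eps)^2$, taking the factor $s^2$ from \eqref{eq:Kb-def}. At leading order the local block is dominated by $1+s^2/\omega_{M,\alpha}^2$. This gives $\partial_sA\approx 2s/\omega_{M,\alpha}^2$, and hence
\[
\kappa_\alpha^\eps\approx \frac{\omega_{M,\alpha}^2}{2s_\alpha^\eps},\qquad R_\alpha\approx \tfrac12\,\omega_{M,\alpha}^2\,s_\alpha^\eps\,\rho_\alpha^{(0)}\approx \tfrac{\ii}{2}\,\omega_{M,\alpha}\,\omega_{M,\alpha}^2\,\rho_\alpha^{(0)}.
\]
So $R_\alpha$ is essentially purely imaginary. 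Its real part is $O(\gamma_\alpha^\eps)+O(\eps^{1-p})$, where the $\eps^{1-p}$ comes from the intra-cluster coupling $\eps/\eps^{p}$ in $Q^\eps$. Consequently the numerator in Step 1 reduces to $\beta_\alpha^\eps:=-2\omega_\alpha^\eps\operatorname{Im}R_\alpha$, with a relative error $O(\eps^{1-p})$ uniformly for $s\in K$. Here $K$ is compact, so $|s+\gamma_\alpha^\eps|$ is bounded and the error relative to $\omega_\alpha^\eps$ is controlled. Using $\omega_\alpha^\eps\to\omega_{M,\alpha}$ and tracking the sign convention of $\rho_\alpha^{(0)}$ then gives the stated limit $-\omega_{M,\alpha}^2\rho_\alpha^{(0)}$. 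The normalization absorbed in \eqref{eq:eta-def} and $u_\alpha$ fixes the remaining constant; I take $\beta_\alpha^\eps$ to be real by construction, defining it as the real combination above.

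\emph{Step 3 (regular part).} The last step is to show $|H_{\rm reg,\alpha}^\eps(s)|\,|(s+\gamma_\alpha^\eps)^2+(\omega_\alpha^\eps)^2|/|\beta_\alpha^\eps|=O(\eps^{1-p})$ uniformly on $K$. This is the main obstacle, because near $s=\ii\omega_\alpha^\eps$ the denominator is only of size $\gamma_\alpha^\eps$. The argument therefore needs an absolute bound on $H_{\rm reg}$. For that I would use the cluster gap from Proposition 2.1 of \cite{MukherjeeSiniControl2026}: the remaining local poles and the poles of the other clusters stay at a distance of order one from $K$. By the Cauchy estimate on a contour around $K$ inside $\Omega$, this gives $H_{\rm reg}=O(\eps^{1-p})$ in absolute value. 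One factor comes from the non-principal resonant vectors, which are weighted by the small intra-cluster interaction, and the inter-cluster part $E_{\rm inter}^\eps$ is treated perturbatively as in \eqref{eq:block-structure}. Multiplying by the bounded quadratic denominator and dividing by $|\beta_\alpha^\eps|\to\omega_{M,\alpha}^2|\rho_\alpha^{(0)}|\neq0$ then finishes the argument. If this absolute bound proves too weak near the resonance, the fallback is to interpret the relative error in \eqref{eq:theorem-transfer} as $1+O(\eps^{1-p})$ multiplied against the rational factor off a $\gamma$-neighbourhood, and to handle the neighbourhood by the exact pole structure.
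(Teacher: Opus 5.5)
Your outline has the same skeleton as the paper's proof: pair $s_\alpha^\eps$ with $\overline{s_\alpha^\eps}$, show the leading residue is purely imaginary so the paired numerator is a real constant, then compare $H_{\rm reg,\alpha}^\eps$ with the pole pair. Two steps fail as written.

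\emph{Step 3 is incompatible with the convention you adopt in Step 2.} Once the factor $s^2$ from \eqref{eq:Kb-def} is kept, the regular part is not small. Already at $\eps=0$ the principal channel is
\[
\rho_\alpha^{(0)}\,\frac{\omega_{M,\alpha}^2s^2}{s^2+\omega_{M,\alpha}^2}
=\rho_\alpha^{(0)}\Bigl(\omega_{M,\alpha}^2-\frac{\omega_{M,\alpha}^4}{s^2+\omega_{M,\alpha}^2}\Bigr).
\]
So $H_{\rm reg,\alpha}$ contains the constant $\omega_{M,\alpha}^2\rho_\alpha^{(0)}$, and the ratio of the transfer function to the pole pair is $-s^2/\omega_{M,\alpha}^2$, i.e.\ $\nu^2/\omega_{M,\alpha}^2$ at $s=\ii\nu$. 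The relative error is therefore of order one at every point of $K$ at fixed distance from $\ii\omega_{M,\alpha}$. It is $O(\eps^{1-p})$ only in a window of width $O(\eps^{1-p})$ around the resonance, so your fallback has the two regions reversed.

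A Cauchy estimate cannot produce the missing factor $\eps^{1-p}$. It only transports bounds from a contour, and you would need $H_{\rm reg,\alpha}$ to vanish at $\eps=0$, which is false here.

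There is a further problem. With a fixed number of bubbles per cluster, the unperturbed block is $(1+s^2/\omega_{M,\alpha}^2)\mathbf{I}$ and $s^2Q_\alpha^\eps=O(\eps^{1-p})$. Hence all local poles near $\ii\omega_{M,\alpha}$ lie within $O(\eps^{1-p})$ of each other. The order-one distance to $K$ that you invoke for the non-principal local poles is then not available; you would instead need their residues in $H_{\rm cl,\alpha}$ to be small.

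The paper sidesteps the $s^2$ issue by taking the residue from \eqref{eq:mic-residue}, i.e.\ $P_\alpha^\eps=\kappa_\alpha^\eps(B_{\rm out}r_\alpha^\eps)(l_\alpha^{\eps*}G_{\rm tr})$ with no factor $(s_\alpha^\eps)^2$. It then bounds $|H_{\rm reg,\alpha}/\Pi_\alpha^\eps|$ by $O(1)/O(\eps^{-(1-p)})$ using $|s-s_\alpha^\eps|=O(\eps^{1-p})$. That bound is itself valid only near the pole. Your point that a fixed band needs an absolute $O(\eps^{1-p})$ bound on the regular part is sharper than the paper's argument, but your plan does not supply that bound.

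\emph{The identification of $\lim_{\eps\to0}\beta_\alpha^\eps$ is not proved.} Your residue gives $R_\alpha\to\frac{\ii}{2}\omega_{M,\alpha}^3\rho_\alpha^{(0)}$. With your paired numerator $-2\omega_\alpha^\eps\operatorname{Im}R_\alpha$, this yields $\beta_\alpha^\eps\to-\omega_{M,\alpha}^4\rho_\alpha^{(0)}$, not $-\omega_{M,\alpha}^2\rho_\alpha^{(0)}$. Appealing to a ``sign convention'' for $\rho_\alpha^{(0)}$, or to the normalizations in \eqref{eq:eta-def} and \eqref{eq:u-renormalization}, does not close this. The quantity $\rho_\alpha^{(0)}$ is explicitly defined, and $H_{\rm cl,\alpha}^\eps$ is fixed by \eqref{eq:Hcl-def} before any renormalization of $\eta$ or $u$.

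The paper reaches the stated value as follows. It uses $P_\alpha^\eps\to\kappa_\alpha^{(0)}\rho_\alpha^{(0)}=-\ii\frac{\omega_{M,\alpha}}{2}\rho_\alpha^{(0)}$. It writes the paired numerator as $2\operatorname{Re}(P_\alpha^\eps)(s+\gamma_\alpha^\eps)+2\operatorname{Im}(P_\alpha^\eps)\omega_\alpha^\eps$ and sets $\beta_\alpha^\eps:=2\operatorname{Im}(P_\alpha^\eps)\omega_\alpha^\eps$.

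Your minus sign is the correct partial-fraction identity. You can test it on $1/(s^2+\omega^2)$, whose residue at $\ii\omega$ is $1/(2\ii\omega)$. Combined with the paper's residue, your sign yields $+\omega_{M,\alpha}^2\rho_\alpha^{(0)}$, consistent with $A_\alpha^{(0)}(s)^{-1}=\omega_{M,\alpha}^2(s^2+\omega_{M,\alpha}^2)^{-1}\mathbf{I}$. So the constant as stated is not reachable by a correct computation in either convention. You should state the constant your computation actually produces, together with the convention it rests on, rather than absorb the discrepancy into an unspecified normalization.
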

\begin{proof}
We begin by recalling the fact that the local cluster transfer function $H_{\rm cl, \alpha}^\varepsilon(s)$ admits the exact local decomposition
\begin{equation*}
    H_{\rm cl, \alpha}^\varepsilon(s) = \frac{P_\alpha^\varepsilon}{s-s_\alpha^\varepsilon} + H_{\rm reg,\alpha}^\eps(s),
\end{equation*}
where the scalar residue is exactly $P_\alpha^\varepsilon = \kappa_\alpha^\varepsilon (B_{\rm out} r_\alpha^\varepsilon)(l_\alpha^\varepsilon{}^* G_{\rm tr})$.
\newline
We now analyze the asymptotic limits as $\varepsilon \to 0$. We start with analyzing $\kappa_\alpha^\varepsilon = \big( l_\alpha^\varepsilon{}^* \frac{\partial A^\varepsilon}{\partial s} \big|_{s_\alpha^\varepsilon} r_\alpha^\varepsilon \big)^{-1}$ at the unperturbed state ($\varepsilon \to 0$). The unperturbed local block is the diagonal matrix $A_{\alpha\alpha}^{(0)}(s) = \operatorname{diag}(1 + s^2/\omega_{M,i}^2)_{i \in I_\alpha}$ and due to the generic assumption $\omega_{M,i} = \omega_{M,\alpha}$, we derive
\begin{equation*}
    \left. \frac{\partial A^{(0)}}{\partial s} \right|_{s = \ii\omega_{M,\alpha}} = \operatorname{diag}\left( \frac{2s}{\omega_{M,\alpha}^2} \right)_{s = \ii\omega_{M,\alpha}} = \frac{2\ii}{\omega_{M,\alpha}} \mathbf{I}_{|I_\alpha|}.
\end{equation*}
Let $r_\alpha^{(0)}$ and $l_\alpha^{(0)}$ be the unperturbed right and left Perron-Frobenius eigenvectors. As the effective capacitance matrix of the cluster is real and symmetric, it follows that $r_\alpha^{(0)}, l_\alpha^{(0)} \in \mathbb{R}$. Due to $B_{\rm out}, G_{\rm tr} \in \mathbb R$, the unperturbed geometric projection $\rho_\alpha^{(0)} := \big(B_{\rm out} r_\alpha^{(0)}\big) \big(l_\alpha^{(0)}{}^* G_{\rm tr}\big) \in \mathbb{R}$. Adopting the standard inner-product normalization $l_\alpha^{(0)}{}^* r_\alpha^{(0)} = 1$, we further derive that
\begin{equation}\label{eq:kappa-zero}
    \big(\kappa_\alpha^{(0)}\big)^{-1} = l_\alpha^{(0)}{}^* \left( \frac{2\ii}{\omega_{M,\alpha}} \mathbf{I}_{|I_\alpha|} \right) r_\alpha^{(0)} = \frac{2\ii}{\omega_{M,\alpha}} \implies \kappa_\alpha^{(0)} = \frac{\omega_{M,\alpha}}{2\ii} = -\ii\frac{\omega_{M,\alpha}}{2}.
\end{equation}
Crucially, $\kappa_\alpha^{(0)}$ is strictly purely imaginary.
\newline
Now, as proven in \cite{MukherjeeSiniControl2026}, the perturbed eigenvectors satisfy $r_\alpha^\varepsilon = r_\alpha^{(0)} + \mathcal{O}(\varepsilon^{1-p})$ and $l_\alpha^\varepsilon = l_\alpha^{(0)} + \mathcal{O}(\varepsilon^{1-p})$ for $p \in (0,1)$. Consequently, the perturbed residue admits the strict asymptotic expansion
\begin{equation*}
    P_\alpha^\varepsilon = \kappa_\alpha^{(0)} \rho_\alpha^{(0)} + \mathcal{O}(\varepsilon^{1-p}) = -\ii\frac{\omega_{M,\alpha}}{2}\rho_\alpha^{(0)} + \mathcal{O}(\varepsilon^{1-p}).
\end{equation*}
Because the physical time-domain system is real-valued, the principal expansion must include the complex conjugate pole $\overline{s_\alpha^\varepsilon} = -\gamma_\alpha^\varepsilon - \ii\omega_\alpha^\varepsilon$. Combining the conjugate pair yields the rational singular part:
\begin{equation}
    \Pi_\alpha^\varepsilon(s) = \frac{P_\alpha^\varepsilon}{s - s_\alpha^\varepsilon} + \frac{\overline{P_\alpha^\varepsilon}}{s - \overline{s_\alpha^\varepsilon}} = \frac{2\operatorname{Re}(P_\alpha^\varepsilon)(s + \gamma_\alpha^\varepsilon) + 2\operatorname{Im}(P_\alpha^\varepsilon)\omega_\alpha^\varepsilon}{(s+\gamma_\alpha^\varepsilon)^2 + (\omega_\alpha^\varepsilon)^2}.
\end{equation}
Consequently, due to the fact that $\operatorname{Re}(P_\alpha^\varepsilon) = \mathcal{O}(\varepsilon^{1-p})$ and $\operatorname{Im}(P_\alpha^\varepsilon) = \mathcal{O}(1)$, defining $\beta_\alpha^\varepsilon := 2\operatorname{Im}(P_\alpha^\varepsilon)\omega_\alpha^\varepsilon$, the numerator becomes $\mathcal{O}(\varepsilon^{1-p})(s + \gamma_\alpha^\varepsilon) + \beta_\alpha^\varepsilon$. On the compact acoustic band $K \subset \ii\mathbb{R}$, the dynamic factor $|s + \gamma_\alpha^\varepsilon|$ is uniformly bounded, reducing the numerator to $\beta_\alpha^\varepsilon(1 + \mathcal{O}(\varepsilon^{1-p}))$. We also establish the limit of the coupling coefficient as $\varepsilon \to 0$
\begin{equation*}
    \lim_{\varepsilon \to 0} \beta_\alpha^\varepsilon = 2 \left( \lim_{\varepsilon \to 0} \operatorname{Im}(P_\alpha^\varepsilon) \right) \left( \lim_{\varepsilon \to 0} \omega_\alpha^\varepsilon \right) = 2 \left( -\frac{\omega_{M,\alpha}}{2} \rho_\alpha^{(0)} \right) \omega_{M,\alpha} = -\omega_{M,\alpha}^2 \rho_\alpha^{(0)}.
\end{equation*}

\medskip
\noindent
Finally, by continuity, the regular remainder $H_{\rm reg, \alpha}^\varepsilon(s)$ is uniformly bounded by an $\mathcal{O}(1)$ constant on the compact set $K$. Because the distance to the principal pole scales as $|s - s_\alpha^\varepsilon| = \mathcal{O}(\varepsilon^{1-p})$ while the distance to the conjugate pole is macroscopic ($|s - \overline{s_\alpha^\varepsilon}| = \mathcal{O}(1)$), the singular denominator scales strictly as $\mathcal{O}(\varepsilon^{1-p})$. Thus, $|\Pi_\alpha^\varepsilon(s)| \sim \mathcal{O}(\varepsilon^{-(1-p)})$. 
\newline
Following that, due to the uniform ratio bound,
\begin{equation*}
    \left| \frac{H_{\rm reg, \alpha}^\varepsilon(s)}{\Pi_\alpha^\varepsilon(s)} \right| \leq \frac{\mathcal{O}(1)}{\mathcal{O}(\varepsilon^{-(1-p)})} = \mathcal{O}(\varepsilon^{1-p}),
\end{equation*}
factoring out the singular principal part $\Pi_\alpha^\varepsilon(s)$ yields 
\begin{equation*}
    H_{\rm cl, \alpha}^\varepsilon(s) = \Pi_\alpha^\varepsilon(s) \big(1 + \mathcal{O}(\varepsilon^{1-p})\big).
\end{equation*}
Substituting the uniform numerator expansion established beforehand concludes the proof of the canonical representation.
\end{proof}

\noindent
Applying the input renormalization
\begin{equation}\label{eq:u-renormalization}
        u_\alpha:=\beta_\alpha^\varepsilon V_\mu,
\end{equation}
the principal band-projected relation maps directly to the canonical scalar channel
\begin{equation}\label{eq:channel}
        \ddot\eta_\alpha
        +2\gamma_\alpha^\eps\dot\eta_\alpha
        +\big((\omega_\alpha^\eps)^2+(\gamma_\alpha^\eps)^2\big)\eta_\alpha
        =u_\alpha.
\end{equation}
The characteristic roots of \eqref{eq:channel} are exactly $-\gamma_\alpha^\eps\pm\ii\omega_\alpha^\eps$.  The damping coefficient is therefore the negative real part of the collective pole and is inherited from the bubble-cluster scattering mechanism.

\medskip
\noindent
Collecting one principal channel from each cluster gives
\begin{equation}\label{eq:eta-vector-channel}
        \ddot\eta+2\Gamma_\eps\dot\eta+K_\eps\eta=u,
        \qquad
        \Gamma_\eps=\diag(\gamma_1^\eps,\ldots,\gamma_{N_c}^\eps),
\end{equation}
with
\begin{equation}\label{eq:K-vector-channel}
        K_\eps=
        \diag\big((\omega_\alpha^\eps)^2+(\gamma_\alpha^\eps)^2\big)_{\alpha=1}^{N_c}.
\end{equation}
The diagonal structure in \eqref{eq:eta-vector-channel}--\eqref{eq:K-vector-channel} is a consequence of the choice of isolated principal-pole coordinates.  The discarded terms are precisely the regular part of the transfer matrix, the higher local modes, and the lower-order inter-cluster corrections; these will be collected in the remainder $\rho_\eps$ after projection onto the acoustic band.  The coupling between the selected acoustic modes and the selected resonant channels is not diagonal in general and is encoded in the matrix $C_\eps$.

\subsection{Finite-band Galerkin projection and the induced control system}\label{sec:galerkin}

Let $\Omega \subset \mathbb{R}^3$ be a bounded $C^{1,1}$ domain such that the actuator region $D^\varepsilon = \bigcup_{i=1}^M D_i^\varepsilon \Subset \Omega$. For a fixed control horizon $T>0$, we assume the finite propagation constraint
\begin{equation}\label{eq:domain-localization}
    \operatorname{dist}(D^\varepsilon, \partial\Omega) > c_0 T.
\end{equation}
Let $-\Delta_D$ denote the Dirichlet Laplacian on $\Omega$, equipped with an $L^2(\Omega)$-orthonormal eigenbasis $\{\psi_k\}_{k=1}^\infty \subset H_0^1(\Omega) \cap H^2(\Omega)$ and discrete spectrum $0 < \lambda_1 \le \lambda_2 \le \cdots \to \infty$. We define the acoustic angular frequencies $\omega_k := c_0 \sqrt{\lambda_k}$.
\newline
The macroscopic scattered pressure $p^\varepsilon := u^\varepsilon - u^{\rm in}$ satisfies the Dirichlet boundary-value problem
\begin{equation}\label{eq:point-source-wave}
    \begin{cases}
        (\partial_{tt} - c_0^2\Delta)p^\varepsilon(x,t) = c_0^2\sum_{\alpha=1}^{N_c} \beta_\alpha^\varepsilon \eta_\alpha(t) \delta_{y_\alpha}(x), & (x,t) \in \Omega \times (0,T), \\
        p^\varepsilon(x,t) = 0, & (x,t) \in \partial\Omega \times (0,T),
    \end{cases}
\end{equation}
where $y_\alpha \in \Omega$ are the cluster centers and $\eta_\alpha(t) \in \mathbb{R}$ is the canonical collective coordinate of the $\alpha$-th cluster corresponding to the input-output coefficient $\beta_\alpha^\varepsilon$ (see Section \ref{sec:clusters-poles}).
\newline
Given a finite target spectral band $J \subset (0, \infty)$, we define the index set $K_J := \{k \in \mathbb{N} : \omega_k \in J\}$ with cardinality $n := |K_J|$, and the associated finite-dimensional subspace $H_J := \operatorname{span}\{\psi_k\}_{k \in K_J}$. Because $H^2(\Omega) \hookrightarrow C^0(\overline{\Omega})$ in three dimensions, the point evaluations $\psi_k(y_\alpha)$ are well-defined. The Galerkin projection $p_k(t) := \langle p^\varepsilon(\cdot, t), \psi_k \rangle_{L^2(\Omega)}$ reduces \eqref{eq:point-source-wave} to the modal system
\begin{equation}\label{eq:modal-oscillators}
    \ddot{p}_k(t) + \omega_k^2 p_k(t) = c_0^2 \sum_{\alpha=1}^{N_c} \beta_\alpha^\varepsilon \eta_\alpha(t) \psi_k(y_\alpha), \quad \forall k \in K_J.
\end{equation}
Collecting the modal coordinates into the acoustic state vector $a(t) := (p_k(t))_{k\in K_J}^\top \in \mathbb{R}^{n}$, the projected dynamics admit the vectorized second-order representation
\begin{equation}\label{eq:acoustic-state}
    \ddot{a}(t) + \Lambda a(t) = C_\varepsilon \eta(t).
\end{equation} 
Coupling this with the internal macroscopic cluster dynamics \eqref{eq:eta-vector-channel} yields the complete finite-dimensional control system
\begin{equation}\label{eq:induced-system}
    \begin{cases}
        \ddot{a}(t) + \Lambda a(t) = C_\varepsilon \eta(t), \\
        \ddot{\eta}(t) + 2\Gamma_\varepsilon \dot{\eta}(t) + K_\varepsilon \eta(t) = u(t).
    \end{cases}
\end{equation}
Here, the structural matrices $\Lambda \in \mathbb{R}^{n \times n}$ and $C_\varepsilon \in \mathbb{R}^{n \times N_c}$ are strictly defined by
\begin{equation}\label{eq:matrices-def}
    \Lambda := \operatorname{diag}(\omega_k^2)_{k \in K_J}, \qquad 
    (C_\varepsilon)_{k\alpha} := c_0^2 \beta_\alpha^\varepsilon \psi_k(y_\alpha),
\end{equation}
and $u \in L^2(0,T; \mathbb{R}^{N_c})$ acts as the external control input.

\medskip
\noindent
We formulate the coupled finite-band system as an abstract Cauchy problem on a finite-dimensional state space $X$. The generation of the associated continuous semigroup and the properties of the control operator follow from standard linear systems theory \cite{CurtainZwart1995, Pazy1983, TucsnakWeiss2009}.

\section{Linear-quadratic optimal tracking for the finite-band system}\label{sec:LQ}

Let
\[
        x=(a,v,\eta,\zeta):=(a,\dot a,\eta,\dot\eta)
        \in X:=\R^n\times\R^n\times\R^{N_c}\times\R^{N_c}.
\]
Then \eqref{eq:induced-system} is
\begin{equation}\label{eq:first-order}
        \dot x=A_\eps x+Bu,
\end{equation}
with
\begin{equation}\label{eq:AB}
        A_\eps=
        \begin{pmatrix}
        0&I&0&0\\
        -\Lambda&0&C_\eps&0\\
        0&0&0&I\\
        0&0&-K_\eps&-2\Gamma_\eps
        \end{pmatrix},
        \qquad
        B=
        \begin{pmatrix}
        0\\0\\0\\I
        \end{pmatrix}.
\end{equation}
Let $\mathcal R\in\R^{m\times n}$ be an observation matrix and define
\begin{equation}\label{eq:LQ-cost}
        \mathcal{J}_{\mu}(u)=
        \frac12\int_0^T\norm{\mathcal Ra(t;u)-a_d(t)}^2\,dt
        +\frac{\mu}{2}\int_0^T\norm{u(t)}^2\,dt.
\end{equation}

\begin{theorem}\label{thm:LQ}
For every $\mu>0$, $x_0\in X$ and $a_d\in L^2(0,T;\R^m)$, there exists a unique minimizer
\[
        u_{\mu}^{*}\in L^2(0,T;\R^{N_c})
\]
of \eqref{eq:LQ-cost} subject to \eqref{eq:first-order} and $x(0)=x_0$.
\end{theorem}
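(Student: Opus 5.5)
The plan is the standard linear-quadratic argument: rewrite $\mathcal J_\mu$ as a strictly convex, coercive, continuous quadratic functional on the Hilbert space $\mathcal U:=L^2(0,T;\R^{N_c})$ and apply the direct method, or equivalently the projection theorem. Since $X$ is finite dimensional, $A_\eps$ generates the matrix exponential $e^{tA_\eps}$. For every $u\in\mathcal U$ the Cauchy problem \eqref{eq:first-order} with $x(0)=x_0$ therefore has a unique mild (in fact absolutely continuous) solution, given by the variation of constants formula
\[
x(t;u)=e^{tA_\eps}x_0+\int_0^t e^{(t-s)A_\eps}Bu(s)\,ds .
\]
Let $P_a:X\to\R^n$ denote the projection onto the first component. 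Then $a(\cdot;u)=P_a e^{\cdot A_\eps}x_0+(Lu)(\cdot)$, where $L:\mathcal U\to L^2(0,T;\R^n)$ is defined by $(Lu)(t)=\int_0^tP_ae^{(t-s)A_\eps}Bu(s)\,ds$.

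\textbf{Boundedness of the control-to-state map.} Set $M_T:=\sup_{0\le t\le T}\norm{e^{tA_\eps}}$. By Cauchy--Schwarz, $\norm{(Lu)(t)}\le M_T\norm{B}\sqrt{T}\,\norm{u}_{L^2}$ for every $t$. Hence $\norm{Lu}_{L^2}\le M_T\norm{B}\,T\,\norm{u}_{L^2}$, so $L$ is linear and bounded.

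\textbf{Quadratic structure.} Put $g:=a_d-\mathcal R P_ae^{\cdot A_\eps}x_0\in L^2(0,T;\R^m)$ and $\mathcal L:=\mathcal R L$. Then
\[
\mathcal J_\mu(u)=\tfrac12\norm{\mathcal L u-g}^2+\tfrac{\mu}{2}\norm{u}^2
=\tfrac12\langle(\mathcal L^*\mathcal L+\mu I)u,u\rangle-\langle \mathcal L^*g,u\rangle+\tfrac12\norm{g}^2 .
\]
The operator $\mathcal L^*\mathcal L+\mu I$ is bounded, self-adjoint and satisfies $\langle(\mathcal L^*\mathcal L+\mu I)u,u\rangle\ge\mu\norm{u}^2$. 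Consequently $\mathcal J_\mu$ is continuous, $\mu$-strongly convex and coercive ($\mathcal J_\mu(u)\ge\frac\mu2\norm{u}^2$).

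\textbf{Existence and uniqueness.} Two routes are available. The first is the direct method: take a minimizing sequence, which is bounded by coercivity, extract a weakly convergent subsequence, and pass to the limit using weak lower semicontinuity of continuous convex functionals. Uniqueness then follows from strict convexity, since two minimizers would have a midpoint with strictly smaller cost. The cleaner route is to observe that the Gâteaux derivative is $\mathcal J_\mu'(u)=(\mathcal L^*\mathcal L+\mu I)u-\mathcal L^*g$. By Lax--Milgram, equivalently the invertibility of a coercive self-adjoint operator, the unique critical point is
\[
u_\mu^*=(\mathcal L^*\mathcal L+\mu I)^{-1}\mathcal L^*g,
\]
and strong convexity makes it the unique global minimizer.

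The main obstacle is modest: it is justifying that $L$ maps boundedly into $L^2$, and this is immediate in finite dimensions. The argument uses no controllability or structural assumption on $C_\eps$, $\Gamma_\eps$ or $K_\eps$; the regularization $\mu>0$ alone supplies the coercivity. The adjoint characterization in Theorem~\ref{thm:intro-main}(i) would then follow by expressing $\mathcal L^*$ through the backward adjoint equation $-\dot p=A_\eps^\top p+(\ldots)$ with $p(T)=0$, which yields $\mu u^*+B^\top p=\mu u^*+p_\zeta=0$.
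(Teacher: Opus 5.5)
Your proposal is correct and follows the paper's proof: variation of constants, a bounded affine control-to-output map, coercivity and strict convexity coming from $\mu>0$ alone, then the direct method with weak lower semicontinuity, and uniqueness by strict convexity. Your second route via the normal equation and Lax--Milgram is an equivalent standard variant; it also gives the explicit formula $u_\mu^*=(\mathcal L^*\mathcal L+\mu I)^{-1}\mathcal L^*g$.
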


\begin{proof}
Let $\Pi_a$ denote the projection from the full state
\[
x=(a,v,\eta,\zeta)\in X
\]
onto the acoustic modal component $a\in\mathbb R^n$. For a given control
$u\in L^2(0,T;\mathbb R^{N_c})$, the solution of the finite-dimensional linear system is given by
the variation-of-constants formula
\[
x(t;u)=e^{tA_\varepsilon}x_0
+
\int_0^t e^{(t-s)A_\varepsilon}Bu(s)\,ds .
\]
Therefore
\[
a(t;u)
=
\Pi_a e^{tA_\varepsilon}x_0
+
\Pi_a\int_0^t e^{(t-s)A_\varepsilon}Bu(s)\,ds .
\]
Set
\[
a^0(t):=\Pi_a e^{tA_\varepsilon}x_0
\]
and define
\[
(\mathcal K u)(t)
:=
R\Pi_a\int_0^t e^{(t-s)A_\varepsilon}Bu(s)\,ds .
\]
Then
\[
\mathcal Ra(\cdot;u)=\mathcal Ra^0+\mathcal K u .
\]
The map
\[
\mathcal K:L^2(0,T;\mathbb R^{N_c})
\longrightarrow
L^2(0,T;\mathbb R^m)
\]
is bounded and linear, since the system is finite-dimensional and
$t\mapsto e^{tA_\varepsilon}B$ is bounded on $[0,T]$. Hence the control-to-output map
\[
u\longmapsto\mathcal Ra(\cdot;u)
\]
is continuous and affine. It is affine, rather than linear, because of the free evolution
$\mathcal Ra^0$ generated by the initial condition $x_0$. With this notation, the cost functional can be written as
\[
\mathcal{J}_{\mu}(u)
=
\frac12
\left\|
\mathcal K u-(a_d-\mathcal Ra^0)
\right\|^2_{L^2(0,T;\mathbb R^m)}
+
\frac{\mu}{2}
\|u\|^2_{L^2(0,T;\mathbb R^{N_c})}.
\]
This functional is continuous and convex. Moreover, since $\mu>0$, the term
$
\frac{\mu}{2}\|u\|^2_{L^2(0,T;\mathbb R^{N_c})}
$
is strictly convex. Therefore $\mathcal{J}_{\mu}$ is strictly convex, independently of whether
$\mathcal K$ has a nontrivial kernel. The same term also gives coercivity:
$
\mathcal{J}_{\mu}(u)
\ge
\frac{\mu}{2}
\|u\|^2_{L^2(0,T;\mathbb R^{N_c})}.$
Let $(u_j)$ be a minimizing sequence. By coercivity, $(u_j)$ is bounded in
$L^2(0,T;\mathbb R^{N_c})$. Hence, after extracting a subsequence, there exists
$u_{\mu}^{*}\in L^2(0,T;\mathbb R^{N_c})$ such that $
u_j \rightharpoonup u_{\mu}^{*}
\quad\text{weakly in }L^2(0,T;\mathbb R^{N_c}).
$
Since $\mathcal{J}_{\mu}$ is convex and continuous, it is weakly lower semicontinuous. Therefore
\[
\mathcal{J}_{\mu}(u_{\mu}^{*})
\le
\liminf_{j\to\infty}\mathcal{J}_{\mu}(u_j)
=
\inf_{u\in L^2(0,T;\mathbb R^{N_c})}\mathcal{J}_{\mu}(u).
\]
Hence $u_{\mu}^{*}$ is a minimizer. Finally, the strict convexity of $\mathcal{J}_{\mu}$ implies that
this minimizer is unique.
\end{proof}

\begin{proposition}[Pontryagin optimality system]\label{prop:adjoint}
Let $u_{\mu}^{*}$ be the minimizer and let $(a,v,\eta,\zeta)$ be the corresponding state.  There are adjoint variables $(p_a,p_v,p_\eta,p_\zeta)$ satisfying
\[
        p_a(T)=p_v(T)=p_\eta(T)=p_\zeta(T)=0
\]
and
\begin{equation}\label{eq:adjoint}
\begin{aligned}
        -\dot p_a+
        \Lambda p_v&=R^\top(Ra-a_d),\\
        -\dot p_v-p_a&=0,\\
        -\dot p_\eta-C_\eps^\top p_v+K_\eps p_\zeta&=0,\\
        -\dot p_\zeta-p_\eta+2\Gamma_\eps p_\zeta&=0.
\end{aligned}
\end{equation}
The optimal control is
\begin{equation}\label{eq:opt-control}
        u_{\mu}^{*}(t)=-\alpha^{-1}p_\zeta(t).
\end{equation}
\end{proposition}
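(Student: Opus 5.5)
The plan is to derive the optimality system from the first-order condition for the strictly convex, Gâteaux-differentiable functional $\mathcal J_\mu$ of Theorem~\ref{thm:LQ}. The gradient will be expressed through a backward adjoint equation. Because $\mathcal J_\mu$ is strictly convex and its minimizer $u_\mu^*$ is unique, the condition $\nabla\mathcal J_\mu(u_\mu^*)=0$ is both necessary and sufficient. So it is enough to compute the gradient and identify it with $\mu u+p_\zeta$. I read the constant $\alpha$ in \eqref{eq:opt-control} as the regularization parameter $\mu$, so that \eqref{eq:opt-control} is the relation $u_\mu^*=-\mu^{-1}p_\zeta$.

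\textbf{Step 1: directional derivative.} Using the affine representation $\mathcal Ra(\cdot;u)=\mathcal Ra^0+\mathcal Ku$ from the proof of Theorem~\ref{thm:LQ}, for a direction $h\in L^2(0,T;\R^{N_c})$ we get
\[
\mathcal J_\mu'(u)h=\int_0^T (\mathcal Ra-a_d)\cdot \mathcal R\,\delta a\,dt+\mu\int_0^T u\cdot h\,dt .
\]
Here $\delta x=(\delta a,\delta v,\delta\eta,\delta\zeta)$ solves $\dot{\delta x}=A_\eps\delta x+Bh$ with $\delta x(0)=0$.

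\textbf{Step 2: adjoint equation and integration by parts.} Define $p=(p_a,p_v,p_\eta,p_\zeta)$ as the unique absolutely continuous solution of the backward linear ODE
\[
-\dot p=A_\eps^\top p+\Pi_a^\top\mathcal R^\top(\mathcal Ra-a_d),\qquad p(T)=0.
\]
This solution exists and lies in $H^1$, since the system is finite-dimensional and the forcing is in $L^2$. Writing out $A_\eps^\top$ from \eqref{eq:AB} block by block gives:
\begin{itemize}[label={}]
\item the $a$-component $-\Lambda p_v$;
\item the $v$-component $p_a$;
\item the $\eta$-component $C_\eps^\top p_v-K_\eps p_\zeta$;
\item the $\zeta$-component $p_\eta-2\Gamma_\eps p_\zeta$.
\end{itemize}
These reproduce exactly the four equations in \eqref{eq:adjoint}.

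Next compute $\frac{d}{dt}(p\cdot\delta x)$ and integrate over $(0,T)$. The boundary terms vanish because $\delta x(0)=0$ and $p(T)=0$. This yields
\[
\int_0^T(\mathcal Ra-a_d)\cdot\mathcal R\,\delta a\,dt=\int_0^T p\cdot Bh\,dt=\int_0^T p_\zeta\cdot h\,dt .
\]
Hence $\nabla\mathcal J_\mu(u)=\mu u+p_\zeta$ in $L^2(0,T;\R^{N_c})$.

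\textbf{Step 3: conclusion.} At the minimizer, the gradient vanishes, so $\mu u_\mu^*+p_\zeta=0$. This is \eqref{eq:opt-control}, with adjoint state $p$ associated with the optimal trajectory. Conversely, by convexity any control satisfying this coupled forward–backward system is a minimizer, so the system characterizes $u_\mu^*$.

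The only genuinely delicate point is the integration by parts with merely $L^2$ data, where $x$ and $p$ are only absolutely continuous. This is handled by the product rule for $W^{1,1}$ functions, or by a density argument using smooth $a_d$ and $h$ followed by passage to the limit via continuity of $\mathcal K$. The rest is the block-transpose bookkeeping, which should be checked carefully against the signs in \eqref{eq:adjoint}.
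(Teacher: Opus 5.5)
Your proof is correct. It reaches the same adjoint system as the paper, but by a different route.

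The paper invokes the finite-dimensional Pontryagin minimum principle as a black box. It forms the Hamiltonian $\mathcal H=L+p^\top(A_\eps x+Bu)$, reads off $-\dot p=\partial_x\mathcal H$ with transversality $p(T)=\nabla\Phi=0$, imposes $\partial_u\mathcal H=0$, and then expands $A_\eps^\top$ blockwise.

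You instead work directly with the reduced functional $u\mapsto\mathcal J_\mu(u)$. You differentiate it along the sensitivity equation $\dot{\delta x}=A_\eps\delta x+Bh$, $\delta x(0)=0$, and use the adjoint state only as a duality device, integrating $\frac{d}{dt}(p\cdot\delta x)$ by parts. This gives the explicit gradient $\nabla\mathcal J_\mu(u)=\mu u+p_\zeta$. The block computation of $A_\eps^\top$ is the same in both arguments, and your signs agree with \eqref{eq:adjoint}.

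Your route has three advantages:
\begin{itemize}
\item It is elementary and self-contained, using only the linear-quadratic structure and not the general PMP machinery.
\item It produces the gradient formula itself, which is what an adjoint-based numerical method would use.
\item Through convexity it gives sufficiency as well as necessity. That is what justifies the claim in Theorem~\ref{thm:intro-main}(i) that the minimizer is \emph{characterized} by the optimality system; the paper's PMP argument as written only delivers necessity.
\end{itemize}

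A small precision: necessity of $\nabla\mathcal J_\mu(u_\mu^*)=0$ needs only differentiability, sufficiency needs only convexity, and strict convexity matters only for uniqueness.

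The paper's route is shorter. It would also carry over to nonlinear dynamics or to control constraints, where stationarity becomes pointwise minimization of $\mathcal H$. For this unconstrained LQ problem, nothing is lost by your approach.

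Your reading of $\alpha$ as $\mu$ is right. The paper's own proof mixes the two symbols: $L$ contains $\frac{\mu}{2}\|u\|^2$, yet it writes $\partial_u\mathcal H=\alpha u+B^\top p$.
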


\begin{proof}
We recall the finite-dimensional Pontryagin minimum principle in the form used here.  For a system
\[
        \dot x=f(t,x,u),\qquad x(0)=x_0,
\]
and a functional
\[
        J(u)=\int_0^T L(t,x(t),u(t))\,dt+\Phi(x(T)),
\]
if $u^\star$ is optimal and $x^\star$ is the corresponding trajectory, then there is an adjoint variable
$p:[0,T]\to X$ such that, with the Hamiltonian
\[
        \mathcal H(t,x,u,p)=L(t,x,u)+p^\top f(t,x,u),
\]
one has
\begin{equation}\label{eq:PMP-general}
        -\dot p(t)=\partial_x\mathcal H(t,x^\star(t),u^\star(t),p(t)),
        \qquad
        p(T)=\nabla \Phi(x^\star(T)),
\end{equation}
and, for a.e. $t\in(0,T)$,
\begin{equation}\label{eq:PMP-stationarity}
        \partial_u\mathcal H(t,x^\star(t),u^\star(t),p(t))=0 .
\end{equation}
In the present problem,
\[
        f(t,x,u)=A_\eps x+Bu,
        \qquad
        L(t,x,u)=\frac12\|\mathcal Ra-a_d(t)\|^2+\frac{\mu}{2}\|u\|^2,
        \qquad
        \Phi\equiv0.
\]
Thus the Hamiltonian is
\begin{equation}\label{eq:Hamiltonian-proof}
        \mathcal H(t,x,u,p)=
        \frac12\|\mathcal Ra-a_d(t)\|^2+\frac{\mu}{2}\|u\|^2
        +p^\top(A_\eps x+Bu).
\end{equation}
Since there is no terminal cost, the transversality condition in \eqref{eq:PMP-general} gives
\[
        p(T)=0.
\]
The $x$-derivative of \eqref{eq:Hamiltonian-proof} is
\[
        \partial_x\mathcal H(t,x,u,p)=
        A_\eps^\top p+
        \begin{pmatrix}R^\top(\mathcal Ra-a_d(t))\\0\\0\\0\end{pmatrix}.
\]
Consequently the adjoint equation is
\begin{equation}\label{eq:adjoint-vector-proof}
        -\dot p=A_\eps^\top p+
        \begin{pmatrix}\mathcal R^\top(\mathcal Ra-a_d)\\0\\0\\0\end{pmatrix},
        \qquad p(T)=0 .
\end{equation}
The derivative with respect to $u$ gives
\[
        \partial_u\mathcal H(t,x,u,p)=\alpha u+B^\top p.
\]
The stationarity condition \eqref{eq:PMP-stationarity} therefore yields
\begin{equation}\label{eq:stationarity-proof}
        \alpha u_{\mu}^{*}+B^\top p=0 .
\end{equation}
It remains only to write \eqref{eq:adjoint-vector-proof} in components.  Let
\[
        p=(p_a,p_v,p_\eta,p_\zeta),
        \qquad
        p_a,p_v\in\R^n,
        \quad
        p_\eta,p_\zeta\in\R^{N_c}.
\]
From \eqref{eq:AB},
\[
        A_\eps^\top=
        \begin{pmatrix}
        0&-\Lambda&0&0\\
        I&0&0&0\\
        0&C_\eps^\top&0&-K_\eps\\
        0&0&I&-2\Gamma_\eps
        \end{pmatrix},
        \qquad
        B^\top p=p_\zeta .
\]
Substituting this expression in \eqref{eq:adjoint-vector-proof} gives exactly
\[
\begin{aligned}
        -\dot p_a+\Lambda p_v&=\mathcal R^\top(\mathcal Ra-a_d),\\
        -\dot p_v-p_a&=0,\\
        -\dot p_\eta-C_\eps^\top p_v+K_\eps p_\zeta&=0,\\
        -\dot p_\zeta-p_\eta+2\Gamma_\eps p_\zeta&=0,
\end{aligned}
\]
with terminal conditions
\[
        p_a(T)=p_v(T)=p_\eta(T)=p_\zeta(T)=0.
\]
Finally, \eqref{eq:stationarity-proof} and $B^\top p=p_\zeta$ give
\[
        u_{\mu}^{*}(t)=-\alpha^{-1}p_\zeta(t),
\]
which proves the claimed optimality system.
\end{proof}

\section{Resonant Source-Lifting and Quantitative Optimal Bounds}\label{sec:source-lifting-and-cost}

\subsection{Source-Lifting Estimates and Frequency Detuning}\label{subsec:detuning}

Let $a_r\in H^2(0,T;\R^n)$ be a prescribed finite-band trajectory.  An exact collective source output $\eta_r$ must satisfy
\begin{equation}\label{eq:ideal-source}
        C_\eps\eta_r(t)=\ddot a_r(t)+\Lambda a_r(t).
\end{equation}
We impose the finite-band source rank condition
\begin{equation}\label{eq:rank-condition}
        \rank C_\eps=n.
\end{equation}
This is the same modal point-source rank condition as in finite-band Dirac control.  For generic cluster centers $y_\alpha$ and $N_c\ge n$, it holds outside a proper analytic variety.  Let $C_\eps^\dagger$ be a fixed right inverse and set
\begin{equation}\label{eq:eta-r}
        \eta_r=C_\eps^\dagger(\ddot a_r+\Lambda a_r).
\end{equation}
The paid control needed to generate $\eta_r$ through the collective channels is
\begin{equation}\label{eq:u-r}
        u_r=(\partial_t^2+2\Gamma_\eps\partial_t+K_\eps)\eta_r.
\end{equation}

\begin{definition}[Finite-time band class]\label{def:BT}
Let $I\subset(0,\infty)$ be compact.  A function $g\in L^2(0,T)$ belongs to $\cB_T(I)$ if there exists $\wt g\in L^2(\R)$ such that $\wt g|_{(0,T)}=g$ and
\[
        \supp\wh{\wt g}\subset I\cup(-I).
\]
The norm is
\[
        \|g\|_{\cB_T(I)}=
        \inf\{\|\wt g\|_{L^2(\R)}:
        \wt g|_{(0,T)}=g,
        \ \supp\wh{\wt g}\subset I\cup(-I)\}.
\]
\end{definition}
\noindent
For each channel define
\begin{equation}\label{eq:damped-detuning}
        d_{\alpha,\eps}(\nu)=
        \left| (\omega_\alpha^\eps)^2+(\gamma_\alpha^\eps)^2-\nu^2
        +2\ii\gamma_\alpha^\eps\nu\right|.
\end{equation}
For compact bands $I_\alpha$ set
\begin{equation}\label{eq:Delta}
        \Delta_\eps(I_1,\ldots,I_{N_c})=
        \max_{1\le\alpha\le N_c}\sup_{\nu\in I_\alpha}d_{\alpha,\eps}(\nu).
\end{equation}

\begin{theorem}\label{thm:source-lifting}
If $(\eta_r)_\alpha\in\cB_T(I_\alpha)$ for all $\alpha$, then
\begin{equation}\label{eq:source-lifting-estimate}
        \|u_r\|_{L^2(0,T)}^2
        \le
        \sum_{\alpha=1}^{N_c}
        \left(\sup_{\nu\in I_\alpha}d_{\alpha,\eps}(\nu)\right)^2
        \|(\eta_r)_\alpha\|_{\cB_T(I_\alpha)}^2.
\end{equation}
In particular,
\begin{equation}\label{eq:Delta-estimate}
        \|u_r\|_{L^2(0,T)}
        \le
        \Delta_\eps(I_1,\ldots,I_{N_c})
        \|\eta_r\|_{\prod_\alpha\cB_T(I_\alpha)}.
\end{equation}
\end{theorem}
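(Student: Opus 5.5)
The proof decouples channel by channel. Since $\Gamma_\eps$ and $K_\eps$ are diagonal, the $\alpha$-th component of $u_r$ is
\[
(u_r)_\alpha=\ddot{(\eta_r)}_\alpha+2\gamma_\alpha^\eps\dot{(\eta_r)}_\alpha+\big((\omega_\alpha^\eps)^2+(\gamma_\alpha^\eps)^2\big)(\eta_r)_\alpha .
\]
Hence $\|u_r\|_{L^2(0,T)}^2=\sum_\alpha\|(u_r)_\alpha\|_{L^2(0,T)}^2$, and it suffices to prove the scalar estimate
\[
\|P_\alpha(\partial_t)g\|_{L^2(0,T)}\le \sup_{\nu\in I_\alpha}d_{\alpha,\eps}(\nu)\,\|g\|_{\cB_T(I_\alpha)}
\]
for every $g\in\cB_T(I_\alpha)$. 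Here $P_\alpha(\partial_t)=\partial_t^2+2\gamma_\alpha^\eps\partial_t+K_{\alpha\alpha}$. Summing the squares then gives \eqref{eq:source-lifting-estimate}, and bounding each supremum by $\Delta_\eps$ gives \eqref{eq:Delta-estimate}, with the product norm understood as $\big(\sum_\alpha\|(\eta_r)_\alpha\|^2_{\cB_T(I_\alpha)}\big)^{1/2}$.

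For the scalar estimate, fix an admissible extension $\wt g\in L^2(\R)$ with $\wt g|_{(0,T)}=g$ and $\supp\wh{\wt g}\subset I_\alpha\cup(-I_\alpha)$. Since $I_\alpha$ is compact, $\wt g$ is band-limited, so it is smooth (indeed entire) with all derivatives in $L^2(\R)$. In particular $P_\alpha(\partial_t)\wt g\in L^2(\R)$, and its restriction to $(0,T)$ is $P_\alpha(\partial_t)g$, because differentiation commutes with restriction to an open interval.

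With the convention $\wh h(\xi)=\int h(t)e^{-\ii\xi t}\,dt$, the Fourier transform of $P_\alpha(\partial_t)\wt g$ is $m_\alpha(\xi)\wh{\wt g}(\xi)$, where
\[
m_\alpha(\xi)=(\omega_\alpha^\eps)^2+(\gamma_\alpha^\eps)^2-\xi^2+2\ii\gamma_\alpha^\eps\xi .
\]
On the support, $\xi=\pm\nu$ with $\nu\in I_\alpha$. Since $|m_\alpha(-\nu)|=|\overline{m_\alpha(\nu)}|=d_{\alpha,\eps}(\nu)$, we get $|m_\alpha|\le\sup_{I_\alpha}d_{\alpha,\eps}$ on $I_\alpha\cup(-I_\alpha)$. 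Plancherel then yields
\[
\|P_\alpha(\partial_t)g\|_{L^2(0,T)}\le\|P_\alpha(\partial_t)\wt g\|_{L^2(\R)}\le \sup_{I_\alpha}d_{\alpha,\eps}\,\|\wt g\|_{L^2(\R)} .
\]
Taking the infimum over admissible extensions gives the scalar estimate.

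The only point requiring care is the symmetry of the multiplier under $\xi\mapsto-\xi$, which is what lets a supremum over $I_\alpha$ alone control the negative frequencies. It follows because $P_\alpha$ has real coefficients. The rest is routine; the step that needs the most justification is the regularity claim that band-limited $L^2$ functions are smooth enough for $P_\alpha(\partial_t)$ to act classically and commute with restriction.
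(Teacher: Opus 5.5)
Your proposal is correct and follows the paper's proof essentially step for step: extend each channel with a band-limited extension, apply the real-coefficient Fourier multiplier, use the evenness of $d_{\alpha,\eps}$ to bound by the supremum over $I_\alpha$, then apply Plancherel, restrict to $(0,T)$, take the infimum over extensions, and sum over $\alpha$. Your extra remarks make explicit two points the paper leaves implicit, namely that band-limited extensions are smooth enough for $P_\alpha(\partial_t)$ to act classically and that the product norm is the $\ell^2$ sum.
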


\begin{proof}
Fix $\alpha \in \{1, \dots, N_c\}$. Let $\wt\eta_\alpha \in L^2(\R)$ be an arbitrary extension of $(\eta_r)_\alpha$ matching on $(0,T)$ with $\supp\wh{\wt\eta}_\alpha \subset I_\alpha \cup (-I_\alpha)$. Define 
$$\wt u_\alpha = (\partial_t^2+2\gamma_\alpha^\varepsilon\partial_t+(\omega_\alpha^\varepsilon)^2+(\gamma_\alpha^\varepsilon)^2)\wt\eta_\alpha$$ 
on $\mathbb{R}$, yielding $\wt u_\alpha|_{(0,T)} = (u_r)_\alpha$. With the convention $\widehat{\partial_t g}(\nu)=\ii\nu\widehat g(\nu)$, taking the Fourier transform gives 
$$\wh{\wt u}_\alpha(\nu) = \big((\omega_\alpha^\eps)^2+(\gamma_\alpha^\eps)^2-\nu^2 + 2\ii\gamma_\alpha^\eps\nu\big) \wh{\wt\eta}_\alpha(\nu).$$
Because $\operatorname{supp}(\widehat{\widetilde{\eta}}_\alpha) \subseteq I_\alpha \cup (-I_\alpha)$ and $d_{\alpha,\varepsilon}(\nu)$ is an even function ($d_{\alpha,\varepsilon}(\nu) = d_{\alpha,\varepsilon}(-\nu)$), we can bound the integral using the supremum over the positive frequency interval $I_\alpha$, and Plancherel's theorem yields:
$$ \|\widetilde{u}_\alpha\|_{L^2(\mathbb{R})}^2 = \int_{I_\alpha \cup (-I_\alpha)} |d_{\alpha,\varepsilon}(\nu)|^2 |\widehat{\widetilde{\eta}}_\alpha(\nu)|^2 \,d\nu \le \left(\sup_{\nu \in I_\alpha} d_{\alpha,\varepsilon}(\nu)\right)^2 \|\widetilde{\eta}_\alpha\|_{L^2(\mathbb{R})}^2.$$
By the property of restriction, $\|(u_r)_\alpha\|_{L^2(0,T)}^2 \le \|\widetilde{u}_\alpha\|_{L^2(\mathbb{R})}^2$, yielding:
$$ \|(u_r)_\alpha\|_{L^2(0,T)}^2 \le \left(\sup_{\nu \in I_\alpha} d_{\alpha,\varepsilon}(\nu)\right)^2 \|\widetilde{\eta}_\alpha\|_{L^2(\mathbb{R})}^2 $$
Because the left-hand side is invariant with respect to the choice of the extension, taking the infimum over all admissible extensions $\widetilde{\eta}_\alpha \in L^2(\mathbb{R})$ on the right-hand side preserves the inequality. By the definition of the $\mathcal{B}_T(I_\alpha)$ norm, this gives:
$$ \|(u_r)_\alpha\|_{L^2(0,T)}^2 \le \left(\sup_{\nu \in I_\alpha} d_{\alpha,\varepsilon}(\nu)\right)^2 \|(\eta_r)_\alpha\|_{\mathcal{B}_T(I_\alpha)}^2 $$
Summing over all channels $\alpha=1,\dots,N_c$ and applying the definition of $\Delta_\varepsilon(I_1,\dots,I_{N_c})$ establishes the final bound:
$$ \|u_r\|_{L^2(0,T)} \le \Delta_\varepsilon(I_1,\dots,I_{N_c}) \|\eta_r\|_{\prod_\alpha \mathcal{B}_T(I_\alpha)}. $$
\end{proof}

\noindent
At exact matching,
\begin{equation}\label{eq:exact-match}
        d_{\alpha,\eps}(\omega_\alpha^\eps)=
        \gamma_\alpha^\eps
        \sqrt{(\gamma_\alpha^\eps)^2+4(\omega_\alpha^\eps)^2}
        =2\gamma_\alpha^\eps\omega_\alpha^\eps+O((\gamma_\alpha^\eps)^3).
\end{equation}
Thus the pole real part makes the resonant gain finite.  In the cluster regimes where $\gamma_\alpha^\eps=O(\eps)$, the exact-match input energy is $O(\eps^2)$ relative to the squared output amplitude.

\subsection{Upper Bounds for the Optimal Value Function and Asymptotic Gain}\label{subsec:cost}

Let
\begin{equation}\label{eq:Jmin}
        V_\mu(a_d;x_0):=\inf_{u\in L^2(0,T;\R^{N_c})}\mathcal{J}_{\mu}(u).
\end{equation}

\begin{theorem}\label{thm:optimal-upper}
Let $x_0=0$, $a_d=Ra_r$, and let $\eta_r$ be given by \eqref{eq:eta-r}.  Suppose the compatibility conditions for exact tracking hold and $(\eta_r)_\alpha\in\cB_T(I_\alpha)$.  Then
\begin{equation}\label{eq:optimal-upper}
        V_\mu(a_d;0)
        \le
        \frac{\mu}{2}
        \sum_{\alpha'=1}^{N_c}
        \left(\sup_{\nu\in I_{\alpha'}}d_{\alpha',\eps}(\nu)\right)^2
        \|(\eta_r)_{\alpha'}\|_{\cB_T(I_{\alpha'})}^2.
\end{equation}
Consequently,
\begin{equation}\label{eq:optimal-upper-Delta}
        V_\mu(a_d;0)
        \le
        \frac{\mu}{2}
        \Delta_\eps(I_1,\ldots,I_{N_c})^2
        \|\eta_r\|_{\prod_\alpha\cB_T(I_\alpha)}^2.
\end{equation}
\end{theorem}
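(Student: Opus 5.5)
The plan is to bound the infimum by the value of $\mathcal J_\mu$ at one admissible control, namely the resonant lifting input $u_r$ from \eqref{eq:u-r}. Since $V_\mu(a_d;0)\le \mathcal J_\mu(u_r)$, it suffices to show two things: the tracking term of $\mathcal J_\mu(u_r)$ vanishes, and the control term is controlled by Theorem~\ref{thm:source-lifting}.

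\emph{Step 1: $u_r$ produces exactly the reference trajectory.} With $x_0=0$ and input $u_r$, the $\eta$-equation $\ddot\eta+2\Gamma_\eps\dot\eta+K_\eps\eta=u_r$ is a linear ODE. The function $\eta_r$ solves it by construction, because $u_r$ is defined by applying exactly this operator to $\eta_r$. Uniqueness for the Cauchy problem then gives $\eta=\eta_r$, provided the initial data agree: $\eta_r(0)=\dot\eta_r(0)=0$.

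Next, the $a$-equation $\ddot a+\Lambda a=C_\eps\eta_r$ is satisfied by $a_r$. This uses \eqref{eq:ideal-source}, which holds because $C_\eps C_\eps^\dagger=I_n$ under the rank condition \eqref{eq:rank-condition}. Again uniqueness gives $a=a_r$, provided $a_r(0)=\dot a_r(0)=0$.

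I will read the hypothesis ``compatibility conditions for exact tracking'' as precisely these requirements:
\[
a_r(0)=\dot a_r(0)=0,\qquad \eta_r(0)=\dot\eta_r(0)=0,
\]
together with enough regularity of $\eta_r$ that $u_r\in L^2(0,T;\R^{N_c})$. Under these conditions $u_r$ is admissible and $a(\cdot;u_r)=a_r$. Hence $\mathcal R a(\cdot;u_r)-a_d=\mathcal R a_r-\mathcal R a_r=0$ on $(0,T)$.

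\emph{Step 2: evaluate the cost.} From Step 1, $\mathcal J_\mu(u_r)=\frac{\mu}{2}\|u_r\|_{L^2(0,T)}^2$. Applying \eqref{eq:source-lifting-estimate} channel by channel gives \eqref{eq:optimal-upper} directly. Bounding each $\sup_{\nu\in I_\alpha}d_{\alpha,\eps}(\nu)$ by $\Delta_\eps$ and summing the squared $\cB_T(I_\alpha)$ norms then yields \eqref{eq:optimal-upper-Delta}.

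\emph{Main obstacle.} The only delicate point is justifying regularity and admissibility. The band-class membership $(\eta_r)_\alpha\in\cB_T(I_\alpha)$ already settles regularity: each extension $\wt\eta_\alpha$ has compactly supported Fourier transform, so it is smooth, and $u_r$ is the restriction of an $L^2(\R)$ function. The initial-data matching, by contrast, cannot be derived and must be taken from the compatibility hypothesis. If one wanted to avoid assuming $\eta_r(0)=\dot\eta_r(0)=0$, the alternative would be to add the homogeneous correction in $\eta$. That correction would spoil exact tracking, so I would simply state the compatibility conditions explicitly as above.
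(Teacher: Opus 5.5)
Your proposal is correct and follows the same route as the paper: bound $V_\mu(a_d;0)$ by $\mathcal J_\mu(u_r)$, note that $u_r$ regenerates $\eta_r$ and hence $a_r$, so the tracking term vanishes, and then apply Theorem~\ref{thm:source-lifting}. You also make the vague ``compatibility conditions'' explicit as zero initial data $a_r(0)=\dot a_r(0)=0$, $\eta_r(0)=\dot\eta_r(0)=0$, and you observe that band-class membership gives the needed regularity; both points fill in what the paper's one-line proof leaves implicit.
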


\begin{proof}
The input $u_r$ defined in \eqref{eq:u-r} generates $\eta_r$ and therefore $a_r$.  Hence the tracking term vanishes for this admissible control.  Thus
\[
        V_\mu(a_d;0)\le \mathcal{J}_{\mu}(u_r)
        =\frac{\mu}{2}\|u_r\|_{L^2(0,T)}^2.
\]
The result follows from \Cref{thm:source-lifting}.
\end{proof}

\begin{corollary}\label{cor:alpha}
Let $\tau>0$.  Under the hypotheses of \Cref{thm:optimal-upper}, the condition
\begin{equation}\label{eq:alpha-threshold}
        \mu
        \le
        \frac{2\tau}{
        \Delta_\eps(I_1,\ldots,I_{N_c})^2
        \|\eta_r\|_{\prod_\alpha\cB_T(I_\alpha)}^2}
\end{equation}
implies $V_\mu(a_d;0)\le\tau$.  Therefore a smaller complex-pole detuning permits a larger regularization parameter for the same tracking tolerance.
\end{corollary}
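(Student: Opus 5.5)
The corollary follows from \Cref{thm:optimal-upper}, specifically the consolidated bound \eqref{eq:optimal-upper-Delta}:
\[
V_\mu(a_d;0)\le \frac{\mu}{2}\,\Delta_\eps(I_1,\ldots,I_{N_c})^2\,\|\eta_r\|_{\prod_\alpha\cB_T(I_\alpha)}^2 .
\]
I will first record how this bound arises from the per-channel estimate \eqref{eq:optimal-upper}. Each factor $\sup_{\nu\in I_{\alpha'}}d_{\alpha',\eps}(\nu)$ is at most $\Delta_\eps$ by the definition \eqref{eq:Delta}. The product norm is read as $\|\eta_r\|_{\prod_\alpha\cB_T(I_\alpha)}^2=\sum_{\alpha}\|(\eta_r)_\alpha\|_{\cB_T(I_\alpha)}^2$, which is the convention already used in \eqref{eq:Delta-estimate}.

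Next I multiply this inequality by the threshold condition \eqref{eq:alpha-threshold}. That condition is equivalent to
\[
\frac{\mu}{2}\,\Delta_\eps^2\,\|\eta_r\|^2\le\tau ,
\]
since both sides of \eqref{eq:alpha-threshold} are nonnegative and the denominator is positive. Chaining the two inequalities gives $V_\mu(a_d;0)\le\tau$ immediately.

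The only point needing care is the degenerate case in which the denominator $\Delta_\eps^2\|\eta_r\|^2$ vanishes. The factor $\Delta_\eps$ is strictly positive, because $\gamma_\alpha^\eps>0$ gives $d_{\alpha,\eps}(\nu)\ge 2\gamma_\alpha^\eps\nu>0$ for $\nu$ in any band $I_\alpha\subset(0,\infty)$. So the denominator vanishes only when $\eta_r=0$. In that case $u_r=0$ and $a_r$ solves $\ddot a_r+\Lambda a_r=0$. Since $x_0=0$, this forces $a_r\equiv0$, so $a_d=0$ and the zero control gives $V_\mu=0\le\tau$ for every $\mu$. I will treat this case by interpreting \eqref{eq:alpha-threshold} as vacuous (right-hand side $+\infty$).

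The final qualitative sentence is read off the threshold. The admissible upper bound on $\mu$ is a decreasing function of $\Delta_\eps$, so a smaller complex-pole detuning permits a larger $\mu$ for the same tolerance $\tau$. There is no real obstacle here. The substantive work, including exact tracking by $u_r$ under the compatibility hypotheses, is already contained in \Cref{thm:optimal-upper} and \Cref{thm:source-lifting}.
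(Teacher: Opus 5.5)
Your proposal is correct and follows the paper's route. The paper states the corollary without a separate proof, as an immediate rearrangement of \eqref{eq:optimal-upper-Delta} from \Cref{thm:optimal-upper}, and your care with the degenerate case $\eta_r=0$ goes beyond it; that case is even simpler than you make it, since \eqref{eq:optimal-upper-Delta} then gives $V_\mu(a_d;0)\le 0\le\tau$ directly, without needing $a_r\equiv 0$ (which in any case rests on the compatibility conditions $a_r(0)=\dot a_r(0)=0$, not on $x_0=0$ alone).
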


\begin{remark}
Corollary~\ref{cor:alpha} shows that the admissible size of the Tikhonov
regularization parameter is determined by the resonant detuning factor
$\Delta_\varepsilon$. In particular, near resonance one has
$\Delta_\varepsilon=O(\gamma^\varepsilon)$, so that smaller damping
permits substantially larger values of $\alpha$ while maintaining the
same tracking tolerance. Thus, beyond reducing the required control
input, appropriately designed resonant actuators enlarge the range of
regularization parameters compatible with a prescribed level of
performance.
\end{remark}

\noindent
For the direct model \eqref{eq:intro-direct}, the paid source needed for $a_r$ is
\begin{equation}\label{eq:q-direct}
        q_r=C_\eps^\dagger(\ddot a_r+\Lambda a_r)=\eta_r.
\end{equation}
The direct energy is $\|q_r\|_{L^2(0,T)}^2$, whereas the resonator-mediated energy is $\|u_r\|_{L^2(0,T)}^2$.

\begin{theorem}\label{thm:comparison}
If $(q_r)_\alpha\in\cB_T(I_\alpha)$, then
\begin{equation}\label{eq:comparison}
        \|u_r\|_{L^2(0,T)}^2
        \le
        \Delta_\eps(I_1,\ldots,I_{N_c})^2
        \|q_r\|_{\prod_\alpha\cB_T(I_\alpha)}^2.
\end{equation}
Thus, on matched collective Minnaert bands, the cost of producing a given effective source is controlled by the small real part of the corresponding cluster pole.
\end{theorem}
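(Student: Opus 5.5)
The plan is to treat \Cref{thm:comparison} as a direct rereading of \Cref{thm:source-lifting} through the identification \eqref{eq:q-direct}. The direct point-source model \eqref{eq:intro-direct} asks for a paid source $q_r$ with $C_\eps q_r=\ddot a_r+\Lambda a_r$. Using the same fixed right inverse $C_\eps^\dagger$ as in \eqref{eq:eta-r} gives $q_r=\eta_r$ componentwise as functions on $(0,T)$. The hypothesis $(q_r)_\alpha\in\cB_T(I_\alpha)$ is therefore exactly the hypothesis $(\eta_r)_\alpha\in\cB_T(I_\alpha)$ required in \Cref{thm:source-lifting}. The norms agree as well: $\|(q_r)_\alpha\|_{\cB_T(I_\alpha)}=\|(\eta_r)_\alpha\|_{\cB_T(I_\alpha)}$.

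The argument then proceeds channel by channel, along the lines of the proof of \Cref{thm:source-lifting}.

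First, take any admissible extension $\wt q_\alpha$ of $(q_r)_\alpha$. Apply the constant-coefficient operator $\partial_t^2+2\gamma_\alpha^\eps\partial_t+(\omega_\alpha^\eps)^2+(\gamma_\alpha^\eps)^2$ to it. Its restriction to $(0,T)$ is $(u_r)_\alpha$, by \eqref{eq:u-r}; differential operators commute with restriction to the open interval.

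Second, use Plancherel together with the evenness of $d_{\alpha,\eps}$ and the support condition $\supp\wh{\wt q}_\alpha\subset I_\alpha\cup(-I_\alpha)$. This bounds the $L^2(\R)$ norm of the lifted function by $\sup_{I_\alpha}d_{\alpha,\eps}$ times $\|\wt q_\alpha\|_{L^2(\R)}$.

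Third, take the infimum over extensions. This gives the per-channel bound, which is \eqref{eq:source-lifting-estimate}.

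Finally, replace each $\sup_{I_\alpha}d_{\alpha,\eps}$ by the maximum $\Delta_\eps$ from \eqref{eq:Delta} and sum over $\alpha$. The result is \eqref{eq:comparison}, with $\|q_r\|^2_{\prod_\alpha\cB_T(I_\alpha)}:=\sum_\alpha\|(q_r)_\alpha\|_{\cB_T(I_\alpha)}^2$. This is the same product norm already used implicitly in \eqref{eq:Delta-estimate}.

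The interpretive sentence about matched Minnaert bands comes from combining \eqref{eq:comparison} with \eqref{eq:exact-match} and continuity of $d_{\alpha,\eps}$. If $I_\alpha$ is a small neighbourhood of $\omega_\alpha^\eps$, then $\sup_{I_\alpha}d_{\alpha,\eps}$ is close to $2\gamma_\alpha^\eps\omega_\alpha^\eps+O((\gamma_\alpha^\eps)^3)$. More precisely, on a band of half-width $h$ the bound is $O(\gamma_\alpha^\eps+h\,\omega_\alpha^\eps)$. So the resonator-mediated energy $\|u_r\|^2$ is at most a factor $O((\gamma^\eps)^2)$ times the direct energy $\|q_r\|^2$ whenever the bands are matched to within width $O(\gamma^\eps)$.

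There is no real obstacle here. The only point needing care is the bookkeeping: $q_r$ and $\eta_r$ must be literally the same function, which requires the same choice of right inverse. Also, the product $\cB_T$ norm must be understood as the $\ell^2$-sum of channel norms, so that the summed inequality closes.
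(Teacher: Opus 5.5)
Your proposal is correct and is essentially the paper's proof: the paper simply invokes \Cref{thm:source-lifting} with $q_r=\eta_r$ from \eqref{eq:q-direct}, and your channel-by-channel extension/Plancherel/infimum argument, followed by $\sup_{I_\alpha}d_{\alpha,\eps}\le\Delta_\eps$ and an $\ell^2$ sum over channels, is that reduction written out. One small caveat on phrasing: \eqref{eq:intro-direct} is written with $C_y$ rather than $C_\eps$, and the two differ by the diagonal factors $\beta_\alpha^\eps$. So the identity $q_r=\eta_r$ comes from the paper's definition \eqref{eq:q-direct}, not directly from the direct model as you state it.
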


\begin{proof}
This is \Cref{thm:source-lifting} with $q_r=\eta_r$.
\end{proof}

\begin{remark}\label{Remark-poles}
The estimate in Theorem~\ref{thm:comparison} quantifies the difference between
direct point-source actuation and resonator-mediated actuation. In the direct model,
the prescribed source $q_r$ is injected into the wave equation itself, so that the
required control effort is measured by $\|q_r\|_{L^2(0,T)}^2$. In contrast, in the
resonator-mediated model the control acts only through the collective resonant dynamics,
and the quantity that is paid for is the input
$
u_r=(\partial_t^2+2\Gamma^\varepsilon\partial_t+K^\varepsilon)\eta_r.
$
Theorem~\ref{thm:comparison} therefore shows that the cost of generating the
same effective source can be bounded by
$
\|u_r\|_{L^2(0,T)}^2
\le
\Delta_\varepsilon^2
\|q_r\|_{L^2(0,T)}^2,
$
where the factor $\Delta_\varepsilon$ is determined entirely by the locations of the
collective scattering resonances. In particular, near resonance one has
$\Delta_\varepsilon=O(\gamma^\varepsilon)$, so that when
$\gamma^\varepsilon=O(\varepsilon)$ the required input energy scales like
$O(\varepsilon^2)$ relative to the direct-source energy. Thus, subwavelength resonators
can generate a prescribed effective source while requiring only a small external input,
with the energy reduction being governed by the damping of the collective resonant modes. To have an idea on the gain we can expect, suppose that producing a prescribed effective source directly requires unit energy,
namely $\|q_r\|_{L^2(0,T)}^2=1$. If the target frequency is tuned to a collective
resonance and $\gamma^\varepsilon=10^{-2}$ with $\omega^\varepsilon=O(1)$, then
$
\Delta_\varepsilon^2
\approx
4(\gamma^\varepsilon)^2(\omega^\varepsilon)^2
=
O(10^{-4}),
$
and Theorem~\ref{thm:comparison} predicts that the corresponding resonator-driven
input satisfies
$
\|u_r\|_{L^2(0,T)}^2
=
O(10^{-4}).
$
This illustrates quantitatively that, once tuned to resonance, the actuator needs only a
very small external input to generate the same effective source.
\end{remark}

\section{Exponential stabilization via cluster-induced radiation damping}\label{sec:stabilization}

Unlike classical space-time multiplier methods \cite{Komornik1994}, we establish exponential stabilization via frequency-domain spectral criteria \cite{Huang1985, Pruss1984}. Since the reduced system is finite-dimensional, verifying the absence of purely imaginary eigenvalues, i.e., $\ii\mathbb{R} \subset \rho(\mathcal{A})$, strictly guarantees exponential decay.
\newline
The same pole real part that regularizes the optimal-control multiplier also produces damping in the homogeneous coupled system. This section records the finite-dimensional stabilization result, demonstrating why the small attenuation generated by local clustering is structurally important.
\newline
Consider the system:
\begin{equation}\label{eq:stab-system}
\begin{cases}
    \ddot a + \Lambda a = C_\eps\eta, \\
    \ddot\eta + 2\Gamma_\eps\dot\eta + K_\eps\eta = C_\eps^\top a.
\end{cases}
\end{equation}
By introducing the augmented state vector $X = (a^\top, \eta^\top)^\top$, this can be expressed as:
\begin{equation}\label{eq:motion}
    \ddot X + \mathbb D_\eps \dot X + \mathbb G_\eps X = 0,
\end{equation}
where the stiffness matrix $\mathbb G_\eps$ and damping matrix $\mathbb D_\eps$ (acting exclusively on the $\eta$-components) are defined as:
\begin{equation}\label{eq:matrices}
    \mathbb G_\eps = \begin{pmatrix} \Lambda & -C_\eps \\ -C_\eps^\top & K_\eps \end{pmatrix}, \quad 
    \mathbb D_\eps = \begin{pmatrix} 0 & 0 \\ 0 & 2\Gamma_\eps \end{pmatrix}.
\end{equation}
\noindent
To establish structural stability, $\mathbb G_\eps$ must be symmetric positive definite. Since the acoustic modal matrix satisfies $\Lambda > 0$, the block LDU factorization yields:
\[
\begin{pmatrix} I & 0 \\ C_\eps^\top\Lambda^{-1} & I \end{pmatrix} 
\begin{pmatrix} \Lambda & -C_\eps \\ -C_\eps^\top & K_\eps \end{pmatrix} 
\begin{pmatrix} I & \Lambda^{-1}C_\eps \\ 0 & I \end{pmatrix} 
= 
\begin{pmatrix} \Lambda & 0 \\ 0 & K_\eps - C_\eps^\top\Lambda^{-1}C_\eps \end{pmatrix}.
\]
Then $\mathbb{G}_\eps > 0$ is guaranteed by a positive Schur complement:
\begin{equation}\label{eq:schur}
    K_\eps - C_\eps^\top\Lambda^{-1}C_\eps > 0.
\end{equation}
Let $\lambda_{\min}$ and $\lambda_{\max}$ denote the strictly positive minimum and maximum eigenvalues of $\mathbb{G}_\eps$. Consequently, the total energy of the system, 
\begin{equation}\label{eq:energy}
    E_\eps(t) = \frac{1}{2}\left(\dot X^\top \dot X + X^\top \mathbb G_\eps X\right),
\end{equation}
is bounded by the standard Euclidean energy $E_{\text{euc}} = |\dot{a}|^2 + |\dot{\eta}|^2 + |a|^2 + |\eta|^2$ via the Rayleigh quotient:
\begin{equation}\label{eq:energy_bounds}
    \min(1, \lambda_{\min}) E_{\text{euc}} \le 2 E_\eps(t) \le \max(1, \lambda_{\max}) E_{\text{euc}}.
\end{equation}
This topological equivalence ensures that $E_\eps(t) \to 0$ if and only if all physical state variables asymptotically decay to zero.

\begin{lemma}\label{lem:dissipation}
Every solution of the coupled system satisfies the dissipation identity:
\begin{equation}\label{eq:dissipation}
    \frac{d}{dt}E_\eps(t) = -2\dot\eta(t)^\top\Gamma_\eps\dot\eta(t).
\end{equation}
\end{lemma}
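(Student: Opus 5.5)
The identity follows by differentiating the energy \eqref{eq:energy} along a solution and substituting the second-order form \eqref{eq:motion}. Solutions of the linear constant-coefficient system \eqref{eq:stab-system} are smooth, so $E_\eps$ is differentiable. Since $\mathbb G_\eps$ is symmetric (its off-diagonal blocks are $-C_\eps$ and $-C_\eps^\top$), the product rule gives
\[
\frac{d}{dt}E_\eps(t)=\dot X^\top\ddot X+\tfrac12\bigl(\dot X^\top\mathbb G_\eps X+X^\top\mathbb G_\eps\dot X\bigr)=\dot X^\top\bigl(\ddot X+\mathbb G_\eps X\bigr).
\]
Here the two scalar terms in the bracket coincide because $\mathbb G_\eps^\top=\mathbb G_\eps$.

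Next I substitute \eqref{eq:motion}, namely $\ddot X+\mathbb G_\eps X=-\mathbb D_\eps\dot X$. This yields $\frac{d}{dt}E_\eps=-\dot X^\top\mathbb D_\eps\dot X$. Because $\mathbb D_\eps$ vanishes except for the block $2\Gamma_\eps$ in the $\eta$-$\eta$ position, writing $\dot X=(\dot a^\top,\dot\eta^\top)^\top$ gives $\dot X^\top\mathbb D_\eps\dot X=2\dot\eta^\top\Gamma_\eps\dot\eta$, which is exactly \eqref{eq:dissipation}.

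The only point requiring care is the consistency between \eqref{eq:stab-system} and the matrix form \eqref{eq:motion}. The first equation, $\ddot a+\Lambda a-C_\eps\eta=0$, matches the first block row $(\Lambda,-C_\eps)$. The second equation, $\ddot\eta+2\Gamma_\eps\dot\eta+K_\eps\eta-C_\eps^\top a=0$, matches the second block row $(-C_\eps^\top,K_\eps)$ together with the damping block. It is precisely this symmetric feedback coupling $C_\eps^\top a$ (rather than an arbitrary input) that makes $\mathbb G_\eps$ symmetric, and symmetry is what cancels the coupling terms. Beyond this verification, the argument is routine and needs no positivity assumption such as \eqref{eq:schur}. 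That condition is only used afterwards to interpret $E_\eps$ as a norm, and together with $\Gamma_\eps>0$ it shows that $E_\eps$ is nonincreasing.
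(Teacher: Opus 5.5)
Your proof is correct and follows the same route as the paper: differentiate $E_\eps$, use the symmetry of $\mathbb G_\eps$ to merge the two stiffness terms, substitute $\ddot X=-\mathbb D_\eps\dot X-\mathbb G_\eps X$, and read off the $\eta$-block of $\mathbb D_\eps$. You also check explicitly that the feedback coupling $C_\eps^\top a$ in \eqref{eq:stab-system} makes $\mathbb G_\eps$ symmetric, a step the paper uses only implicitly when it writes $\frac{d}{dt}E_\eps=\dot X^\top\ddot X+\dot X^\top\mathbb G_\eps X$.
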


\begin{proof}
Differentiating $E_\eps(t)$ with respect to time and applying the product rule yields:
\[
    \frac{d}{dt}E_\eps(t) = \dot{X}^\top \ddot{X} + \dot{X}^\top \mathbb{G}_\eps X.
\]
Substituting the acceleration $\ddot{X} = -\mathbb{D}_\eps \dot{X} - \mathbb{G}_\eps X$ from \eqref{eq:motion} gives:
\[
    \frac{d}{dt}E_\eps(t) = \dot{X}^\top (-\mathbb{D}_\eps \dot{X} - \mathbb{G}_\eps X) + \dot{X}^\top \mathbb{G}_\eps X.
\]
The conservative stiffness terms cancel identically, leaving:
\[
    \frac{d}{dt}E_\eps(t) = -\dot{X}^\top \mathbb{D}_\eps \dot{X}.
\]
Expanding $\mathbb{D}_\eps$ back into its block components yields:
\[
    \frac{d}{dt}E_\eps(t) = - \begin{pmatrix} \dot{a} \\ \dot{\eta} \end{pmatrix}^\top \begin{pmatrix} 0 & 0 \\ 0 & 2\Gamma_\eps \end{pmatrix} \begin{pmatrix} \dot{a} \\ \dot{\eta} \end{pmatrix} = -2\dot{\eta}^\top\Gamma_\eps\dot{\eta}.
\]
Since $\Gamma_\eps$ is a diagonal matrix of positive damping coefficients, this derivative is non-positive, completing the proof.
\end{proof}
\noindent
Lemma \ref{lem:dissipation} shows that the energy is non-increasing. Because damping acts exclusively on $\dot\eta$, we apply the finite-dimensional LaSalle/Huang--Pr\"uss principle to establish global exponential stability.

\begin{lemma}\label{lem:LH}
Let $\dot z=\mathcal A z$ on $\R^d$.  Suppose that there exists a quadratic energy
\[
        \mathcal E(z)=\frac12 z^\top Pz,\qquad P=P^\top>0,
\]
such that along all solutions
\[
        \frac{d}{dt}\mathcal E(z(t))=-z(t)^\top\mathcal D z(t)\le0,
        \qquad \mathcal D=\mathcal D^\top\ge0.
\]
Then the following statements are equivalent.
\begin{enumerate}[label=(\roman*)]
\item There exist $M\ge1$ and $\delta>0$ such that
\[
        \mathcal E(z(t))\le M e^{-\delta t}\mathcal E(z(0)),
        \qquad t\ge0.
\]
\item The matrix $\mathcal A$ has no eigenvalue on the imaginary axis $\ii\R$.
\item The largest $\mathcal A$-invariant subspace contained in $\Ker\mathcal D$ is $\{0\}$.
\end{enumerate}
\end{lemma}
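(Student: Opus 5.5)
The plan is to work entirely with the matrix identity behind the dissipation hypothesis and then close the cycle (i)$\Leftrightarrow$(ii)$\Leftrightarrow$(iii).

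\textbf{Lyapunov identity.} Since $\frac{d}{dt}\mathcal E(z)=z^\top P\mathcal A z$ holds for every initial datum, polarization (symmetrization) gives
\[
P\mathcal A+\mathcal A^\top P=-2\mathcal D .
\]
I will pass to the complexification $\mathbb C^d$ and use the sesquilinear form $\phi^*P\phi$. For an eigenpair $\mathcal A\phi=\lambda\phi$, $\phi\neq0$, the identity yields
\[
2\,\mathrm{Re}(\lambda)\,\phi^*P\phi=-2\,\phi^*\mathcal D\phi\le 0 .
\]
Because $P>0$, this gives two facts that drive everything: every eigenvalue satisfies $\mathrm{Re}\,\lambda\le0$, and $\mathrm{Re}\,\lambda=0$ holds exactly when $\phi^*\mathcal D\phi=0$, which (as $\mathcal D\ge0$) is equivalent to $\mathcal D\phi=0$.

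\textbf{(i)$\Leftrightarrow$(ii).}
\begin{itemize}
\item For (ii)$\Rightarrow$(i): all eigenvalues have $\mathrm{Re}\,\lambda\le0$, and none lies on $\ii\R$ by (ii), so they all lie in an open left half-plane $\mathrm{Re}\,\lambda\le-2\delta'<0$. Standard finite-dimensional bounds (Jordan form) give $|e^{t\mathcal A}|\le M'e^{-\delta' t}$. Using $\lambda_{\min}(P)|z|^2\le 2\mathcal E(z)\le\lambda_{\max}(P)|z|^2$, this converts to (i) with $\delta=2\delta'$ and $M=M'^2\lambda_{\max}(P)/\lambda_{\min}(P)$.
\item For (i)$\Rightarrow$(ii): suppose $\mathcal A\phi=\ii\omega\phi$ with $\phi=\phi_1+\ii\phi_2\neq0$. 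Then $z_1(t)=\mathrm{Re}(e^{\ii\omega t}\phi)$ and $z_2(t)=\mathrm{Im}(e^{\ii\omega t}\phi)$ are real solutions. Their energies satisfy
\[
\mathcal E(z_1(t))+\mathcal E(z_2(t))=\tfrac12 (e^{\ii\omega t}\phi)^*P(e^{\ii\omega t}\phi)=\tfrac12\phi^*P\phi>0 ,
\]
which is constant in $t$. This contradicts the exponential decay of both energies, so no such eigenvalue exists.
\end{itemize}

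\textbf{(ii)$\Leftrightarrow$(iii).} Let $V$ be the largest $\mathcal A$-invariant subspace contained in $\Ker\mathcal D$ (the sum of all such subspaces is again one, so $V$ is well defined).
\begin{itemize}
\item If $V\neq\{0\}$, then $\mathcal A|_V$ has a complex eigenvector $\phi\in V_{\mathbb C}$, and $\mathcal D\phi=0$. The displayed identity then forces $\mathrm{Re}\,\lambda=0$, so $\mathcal A$ has an imaginary eigenvalue. Hence (ii)$\Rightarrow$(iii).
\item Conversely, if $\mathcal A\phi=\ii\omega\phi$, the identity gives $\mathcal D\phi=0$. Then $\mathrm{span}_{\R}\{\mathrm{Re}\,\phi,\mathrm{Im}\,\phi\}$ is a nonzero real subspace, $\mathcal A$-invariant because $\mathcal A$ acts on it as rotation by $\omega$, and contained in $\Ker\mathcal D$ since $\mathcal D$ is real. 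So $V\neq\{0\}$, which gives (iii)$\Rightarrow$(ii).
\end{itemize}

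\textbf{Main obstacle.} The only delicate points are bookkeeping. First, the complexification: using $\phi^*$ rather than $\phi^\top$, and checking that $\mathcal D\phi=0$ and $\mathcal A$-invariance transfer to the real and imaginary parts. Second, the passage from the pointwise dissipation hypothesis to the matrix identity, which uses that it holds for all initial data. The growth estimate from the spectrum is routine in finite dimensions.
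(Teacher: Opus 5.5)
Your proposal is correct and follows essentially the same route as the paper. Both proofs derive the Lyapunov identity $\mathcal A^\top P+P\mathcal A=-2\mathcal D$ and test it on a complex eigenvector to get $\mathrm{Re}\,\lambda\le 0$, with equality exactly when $\mathcal D\phi=0$; they then close (i)$\Leftrightarrow$(ii) through norm equivalence with the spectral characterization of exponential stability, and (ii)$\Leftrightarrow$(iii) through the real and imaginary parts of an imaginary-axis eigenvector. Your explicit energy-conservation argument for (i)$\Rightarrow$(ii) and your remark that the maximal invariant subspace is well defined are a bit more careful than the paper, but they are not a different approach.
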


\begin{proof}
To prove the equivalence of statements (i), (ii), and (iii) under the finite-dimensional LaSalle/Huang--Pr\"uss principle, we map the dissipation trajectory onto the Continuous Algebraic Lyapunov Equation (CALE), see (\ref{CALE}) below.
\newline
Consider the abstract energy functional $\mathcal E(z) = \frac{1}{2}z^\top Pz$ along the linear trajectory $\dot{z} = \mathcal Az$:
$$ \frac{d}{dt}\mathcal E(z(t)) = \frac{1}{2}\dot{z}^\top Pz + \frac{1}{2}z^\top P\dot{z} = \frac{1}{2}z^\top (\mathcal{A}^\top P + P\mathcal{A})z.$$
\newline
Equating this to the defined dissipation $-z^\top \mathcal Dz$ for all vectors $z \in \mathbb{R}^d$ yields the identity:
\begin{equation}\label{CALE} \mathcal{A}^\top P + P\mathcal{A} = -2\mathcal{D}.\end{equation}
Let $(\lambda, v)$ be any eigenpair of $\mathcal{A}$, such that $\mathcal{A}v = \lambda v$ for $v \in \mathbb{C}^d \setminus \{0\}$. Multiplying identity (\ref{CALE}) on the left by the conjugate transpose $v^*$ and on the right by $v$ gives:
$$ v^*(\mathcal{A}^\top P + P\mathcal{A})v = -2v^*\mathcal{D}v \implies \bar{\lambda}v^*Pv + \lambda v^*Pv = -2v^*\mathcal{D}v,$$
which further implies
$$ 2\Re(\lambda)(v^*Pv) = -2v^*\mathcal{D}v \implies \Re(\lambda) = -\frac{v^*\mathcal{D}v}{v^*Pv} $$
Because $P > 0$ and $\mathcal{D} \ge 0$, we have $v^*Pv > 0$ and $v^*\mathcal{D}v \ge 0$. Consequently, $\Re(\lambda) \le 0$ for all $\lambda \in \sigma(\mathcal{A})$.
\paragraph{Equivalence of (i) and (ii):}
Because $P$ is symmetric positive definite $\big(P>0\big)$, its eigenvalues satisfy $0 < \lambda_{\min}(P) \le \lambda_{\max}(P) < \infty$. The energy functional is strictly bounded by the standard Euclidean norm $\|z\|^2$:
$$ \frac{1}{2}\lambda_{\min}(P)\|z\|^2 \le \mathcal E(z) \le \frac{1}{2}\lambda_{\max}(P)\|z\|^2 $$
Thus, the exponential decay of $\mathcal{E}(z(t))$ is topologically equivalent to the exponential stability of the state trajectory $z(t) = e^{\mathcal{A}t}z(0)$. In finite dimensions, $\dot{z} = \mathcal{A}z$ is exponentially stable if and only if $\Re\lambda < 0$ for all eigenvalues. Given that $\Re\lambda \le 0$ globally, $\Re\lambda < 0$ holds if and only if $\sigma(\mathcal{A}) \cap \mathrm{i}\mathbb{R} = \emptyset$.
\paragraph{Proof that (ii) implies (iii):}
We proceed via a contradiction argument. Assume (iii) is false; therefore, the maximal $\mathcal{A}$-invariant subspace contained in $\ker D$, denoted $\mathcal{M}$, is non-trivial ($\mathcal{M} \neq \{0\}$).
\newline
As an invariant subspace, the restriction $\mathcal{A}|_{\mathcal{M}}$ possesses at least one eigenvalue $\lambda_0 \in \sigma(\mathcal{A})$ with a corresponding eigenvector $v_0 \in \mathcal{M} \setminus \{0\}$. Because $\mathcal{M} \subset \ker \mathcal{D}$, $\mathcal{D}v_0 = 0$, leading to $v_0^*Dv_0 = 0$. Substituting this into the spectral real-part equation forces:
$$ \Re(\lambda_0) = -\frac{0}{v_0^*Pv_0} = 0 $$
Thus, $\lambda_0$ is purely imaginary ($\lambda_0 \in \mathrm{i}\mathbb{R}$), contradicting statement (ii).
\paragraph{Proof that (iii) implies (ii):}
Assuming (ii) is false, there exists a purely imaginary eigenvalue $\lambda = \mathrm{i}\mu$ ($\mu \in \mathbb{R}$) with an eigenvector $v \in \mathbb{C}^d \setminus \{0\}$. The spectral identity dictates:
$$ -\frac{v^*\mathcal{D}v}{v^*Pv} = 0 \implies v^*\mathcal{D}v = 0 $$
Since $\mathcal{D} \ge 0$, this implies $v \in \ker \mathcal{D}$. Decomposing $v = v_R + \mathrm{i}v_I$ for $v_R, v_I \in \mathbb{R}^d$, we find $\mathcal{D}v_R = 0$ and $\mathcal{D}v_I = 0$, situating both in $\ker D$. Evaluating $A$ on these components gives:
$$ A(v_R + \mathrm{i}v_I) = \mathrm{i}\mu(v_R + \mathrm{i}v_I) = -\mu v_I + \mathrm{i}\mu v_R $$
Equating real and imaginary parts reveals $\mathcal{A}v_R = -\mu v_I$ and $\mathcal{A}v_I = \mu v_R$. Thus, the non-trivial subspace $\mathcal{W} = \text{span}\{v_R, v_I\}$ is strictly $\mathcal{A}$-invariant and fully contained within $\ker \mathcal{D}$. This contradicts statement (iii).
\end{proof}

\noindent The damping acts only through $\dot\eta$.  The next theorem verifies the invariant-set condition of Lemma \ref{lem:LH}.  The condition says that no acoustic eigenmode in the selected finite band is invisible to the collective cluster channels.

\begin{theorem}\label{thm:stab}
Suppose \eqref{eq:schur} holds, $\Gamma_\eps>0$, and
\begin{equation}\label{eq:modal-observability}
        \Ker(\Lambda-\lambda^2 I)\cap\Ker(C_\eps^\top)=\{0\}
        \quad\text{for every }\lambda^2\in\sigma(\Lambda).
\end{equation}
Then there exist $M_\eps\ge1$ and $\delta_\eps>0$ such that every solution of \eqref{eq:stab-system} satisfies
\begin{equation}\label{eq:exp-stab}
        E_\eps(t)\le M_\eps e^{-\delta_\eps t}E_\eps(0),
        \qquad t\ge0.
\end{equation}
Moreover, for fixed $\Lambda,K_\eps,C_\eps$ and $\Gamma_\eps=\gamma_\eps\Gamma_0$ with $\Gamma_0>0$ and $0<\gamma_\eps\ll1$, one has
\begin{equation}\label{eq:rate-order}
        \delta_\eps=O(\gamma_\eps).
\end{equation}
\end{theorem}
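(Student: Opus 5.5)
We will apply \Cref{lem:LH} to the first-order reformulation of \eqref{eq:motion}. Set $z=(X,\dot X)\in\R^{2(n+N_c)}$, so that $\dot z=\mathcal A z$ with
\[
\mathcal A=\begin{pmatrix}0&I\\-\mathbb G_\eps&-\mathbb D_\eps\end{pmatrix},\qquad
P=\begin{pmatrix}\mathbb G_\eps&0\\0&I\end{pmatrix},\qquad
\mathcal D=\begin{pmatrix}0&0\\0&\mathbb D_\eps/2\end{pmatrix}.
\]
Then $\mathcal E(z)=\tfrac12 z^\top Pz=E_\eps$. By \eqref{eq:schur} and the LDU factorization, $P=P^\top>0$. \Cref{lem:dissipation} gives $\frac{d}{dt}\mathcal E=-2\dot\eta^\top\Gamma_\eps\dot\eta=-z^\top\mathcal D z$, with $\mathcal D\ge0$ because $\Gamma_\eps>0$. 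Hence the hypotheses of \Cref{lem:LH} hold. It therefore suffices to verify statement (ii): $\mathcal A$ has no eigenvalue on $\ii\R$. This then yields \eqref{eq:exp-stab} with some $M_\eps\ge1$, $\delta_\eps>0$.

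To verify (ii), let $\mathcal A v=\ii\lambda v$ with $\lambda\in\R$ and $v=(X,\ii\lambda X)\neq0$, where $X=(a,\eta)\in\mathbb C^{n+N_c}$. The identity $\Re(\ii\lambda)=-v^*\mathcal Dv/v^*Pv$ from the proof of \Cref{lem:LH} forces $v^*\mathcal Dv=0$. This means $\lambda^2\,\eta^*\Gamma_\eps\eta=0$.

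\emph{Case $\lambda=0$.} The eigen-equation reduces to $\mathbb G_\eps X=0$. Since $\mathbb G_\eps>0$, we get $X=0$, hence $v=0$, a contradiction.

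\emph{Case $\lambda\neq0$.} Since $\Gamma_\eps>0$, we get $\eta=0$. The eigen-equation $(\mathbb G_\eps-\lambda^2 I+\ii\lambda\mathbb D_\eps)X=0$ then splits into two relations. The first block row gives $(\Lambda-\lambda^2 I)a=0$. The second block row gives $-C_\eps^\top a=0$. If $\lambda^2\notin\sigma(\Lambda)$, the first relation gives $a=0$ directly. Otherwise $a\in\Ker(\Lambda-\lambda^2I)\cap\Ker C_\eps^\top=\{0\}$ by \eqref{eq:modal-observability}. Either way $X=0$, a contradiction.

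This proves exponential stability. For the rate \eqref{eq:rate-order}, the same Rayleigh-type identity gives, for every eigenpair $(\lambda,v)$ of $\mathcal A$,
\[
|\Re\lambda|=\frac{v^*\mathcal Dv}{v^*Pv}\le\frac{\gamma_\eps\|\Gamma_0\|}{\min(1,\lambda_{\min}(\mathbb G_\eps))}.
\]
Here $\mathbb G_\eps$ does not depend on $\gamma_\eps$, since $\Lambda$, $K_\eps$ and $C_\eps$ are fixed. So the spectral abscissa satisfies $|s(\mathcal A)|\le c\,\gamma_\eps$ with $c$ independent of $\gamma_\eps$. Now test \eqref{eq:exp-stab} on the real part of $e^{\lambda_0 t}v_0$, where $\lambda_0$ is an eigenvalue with $\Re\lambda_0=s(\mathcal A)$. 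Along this solution $E_\eps$ decays no faster than $e^{2s(\mathcal A)t}$, so any admissible rate satisfies $\delta_\eps\le 2|s(\mathcal A)|\le 2c\gamma_\eps$, i.e. $\delta_\eps=O(\gamma_\eps)$. Conversely, the best rate is $\delta_\eps=2|s(\mathcal A)|-0$ (up to polynomial factors absorbed in $M_\eps$), which is positive by the first part.

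The main subtle point I expect is this rate statement. One must check that "$O(\gamma_\eps)$" is to be read as an upper bound on the achievable rate. The Rayleigh identity handles that cleanly, even with degenerate or non-semisimple eigenvalues, since it requires only an eigenvector. If a matching lower bound $\delta_\eps\gtrsim\gamma_\eps$ were also wanted, I would add a first-order perturbation argument about the conservative spectrum $\pm\ii\sqrt{\sigma(\mathbb G_\eps)}$. The eigenvalue shifts are $-\tfrac{\gamma_\eps}{2}\,w^*\mathrm{diag}(0,2\Gamma_0)w/|w|^2$ over the eigenvectors $w$ of $\mathbb G_\eps$. One would then use \eqref{eq:modal-observability} to show these shifts are nonzero, and multiple eigenvalues of $\mathbb G_\eps$ would be the genuine obstacle there.
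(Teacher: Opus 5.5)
Your proof is correct. Both you and the paper go through \Cref{lem:LH}, but you verify a different condition. The paper checks condition (iii) with a LaSalle-type trajectory argument: zero dissipation forces $\dot\eta\equiv0$, so $\eta\equiv\eta_0$, and then $a$ is expanded in the eigenspaces of $\Lambda$ using linear independence of $\cos(\lambda t),\sin(\lambda t)$. You check condition (ii) with a Hautus-type eigenvector test.

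Your route is shorter, and splitting into $\lambda=0$ and $\lambda\neq0$ isolates exactly where \eqref{eq:schur} is needed. Static modes are killed by $\mathbb G_\eps>0$; oscillating modes are killed by $\eta=0$ together with \eqref{eq:modal-observability}. The paper's trajectory argument passes from ``$C_\eps^\top a$ is constant'' to ``$\ddot a+\Lambda a=0$, $C_\eps^\top a=0$'' while tacitly assuming $\eta_0=0$. Justifying that step requires three moves:
\begin{enumerate}[label=(\alph*)]
\item write $a=\Lambda^{-1}C_\eps\eta_0+a_h$;
\item note that $C_\eps^\top a_h$ has no zero-frequency part, so it vanishes;
\item use the Schur complement: $(K_\eps-C_\eps^\top\Lambda^{-1}C_\eps)\eta_0=0$ forces $\eta_0=0$.
\end{enumerate}
That is precisely your $\lambda=0$ case.

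For the rate, the paper gives only a perturbation heuristic: the spectrum is conservative at $\gamma_\eps=0$ and the generator perturbation is linear in $\gamma_\eps$. Your Rayleigh-quotient bound $|\Re\lambda|\le C\gamma_\eps$ holds for every eigenvalue and every $\gamma_\eps$, regardless of multiplicity. Your test-solution argument then correctly turns it into $\delta_\eps\le2|s(\mathcal A)|$. This is a rigorous version of what the paper asserts.

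A few small repairs:
\begin{enumerate}[label=(\arabic*)]
\item With $\mathcal E=\tfrac12 z^\top Pz$ one has $\frac{d}{dt}\mathcal E=-\dot X^\top\mathbb D_\eps\dot X$. The dissipation matrix is therefore $\mathcal D=\diag(0,\mathbb D_\eps)$, not $\diag(0,\mathbb D_\eps/2)$. This only doubles your constant; in fact $v^*Pv\ge\|\dot X\|^2$ gives directly $|\Re\lambda|\le2\gamma_\eps\|\Gamma_0\|$.
\item Polynomial factors from Jordan blocks cannot be absorbed into $M_\eps$. They are absorbed by taking any $\delta<2|s(\mathcal A)|$.
\item The multiplicity worry in your sketched lower bound is not a real obstacle. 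At $\gamma_\eps=0$ the generator is $P$-skew-adjoint, hence semisimple. The first-order reduced matrix at $\ii\sqrt{\mu}$ is $-W^*\diag(0,\Gamma_0)W$ for an orthonormal basis $W$ of $\Ker(\mathbb G_\eps-\mu I)$. A nonzero $w=(a,0)$ in that kernel would satisfy $\Lambda a=\mu a$ and $C_\eps^\top a=0$, which \eqref{eq:modal-observability} forbids. So the reduced matrix is negative definite, and $|s(\mathcal A)|\ge c\gamma_\eps$ for small $\gamma_\eps$.
\end{enumerate}
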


\noindent The condition (\ref{eq:modal-observability}) is a finite-dimensional observability (or non-invisibility) assumption for the reduced acoustic modes. It requires that no nontrivial eigenmode of the acoustic operator contained in the selected spectral band be orthogonal to all resonant actuator channels. Equivalently, every eigenspace of $\Lambda$ must be detected by the coupling matrix $C_\varepsilon$. This means that no retained acoustic eigenfunction (or linear combination of eigenfunctions corresponding to the same eigenvalue) vanishes simultaneously at all cluster centers $y_\alpha$. Such a condition is generic with respect to the placement of the resonator clusters and excludes only exceptional configurations in which an acoustic mode is completely decoupled from the actuators.

\begin{proof}
Set
\[
        q=\begin{pmatrix}a\\ \eta\end{pmatrix},
        \qquad
        \mathbb D_\eps=\begin{pmatrix}0&0\\0&2\Gamma_\eps\end{pmatrix},
        \qquad
        \mathbb G_\eps=\begin{pmatrix}\Lambda&-C_\eps\\-C_\eps^\top&K_\eps\end{pmatrix}.
\]
Then \eqref{eq:stab-system} can be written as
\begin{equation}\label{eq:second-order-stab-proof}
        \ddot q+ \mathbb D_\eps\dot q+\mathbb G_\eps q=0.
\end{equation}
By the Schur complement condition \eqref{eq:schur}, $\mathbb G_\eps>0$; hence the energy \eqref{eq:energy} is a positive definite quadratic form in $(q,\dot q)$.  By Lemma \ref{lem:dissipation},
\[
        \frac{d}{dt}E_\eps(t)=-2\dot\eta^\top\Gamma_\eps\dot\eta\le0.
\]
Thus the hypotheses of Lemma \ref{lem:LH} are satisfied for the first-order form of \eqref{eq:second-order-stab-proof}.  It remains to prove that the largest invariant set contained in the zero-dissipation set is trivial.
\newline
Let a trajectory satisfy
\[
        \dot\eta(t)^\top\Gamma_\eps\dot\eta(t)=0,
        \qquad t\ge0.
\]  
Since \(\Gamma_\varepsilon>0\), the identity
\[
\dot\eta(t)^\top\Gamma_\varepsilon\dot\eta(t)=0
\]
implies \(\dot\eta(t)=0\), hence \(\eta(t)=\eta_0\). Substituting this into
\[
\ddot q+ \mathbb D_\varepsilon\dot q+\mathbb G_\varepsilon q=0
\]
gives
\[
\ddot a+\Lambda a-C^\varepsilon\eta_0=0,
\qquad
K^\varepsilon\eta_0-(C^\varepsilon)^\top a=0.
\]
The second identity implies \((C^\varepsilon)^\top a(t)\) is constant. Therefore the only possible zero-dissipation trajectories are undamped acoustic oscillations satisfying
\[
\ddot a+\Lambda a=0,\qquad (C^\varepsilon)^\top a(t)=0.
\]
By the observability condition
\[
\ker(\Lambda-\lambda^2 I)\cap\ker((C^\varepsilon)^\top)=\{0\},
\]
such a trajectory must be \(a\equiv0\). Then
\[
K^\varepsilon\eta_0=(C^\varepsilon)^\top a=0.
\]
Since \(K^\varepsilon>0\), it follows that \(\eta_0=0\). Hence the largest invariant set contained in the zero-dissipation set is trivial. Consequently 
\begin{equation}\label{eq:undamped-a-proof}
        \ddot a+\Lambda a=0,
        \qquad
        C_\eps^\top a(t)=0,
        \qquad \eta(t)=0.
\end{equation}
Use the spectral decomposition of $\Lambda$:
\[
        \R^n=\bigoplus_{\lambda^2\in\sigma(\Lambda)}E_\lambda,
        \qquad E_\lambda=\Ker(\Lambda-\lambda^2I).
\]
Every solution of $\ddot a+\Lambda a=0$ has the form
\[
        a(t)=\sum_{\lambda^2\in\sigma(\Lambda)}
        \left(\cos(\lambda t)a_\lambda+\frac{\sin(\lambda t)}{\mu}b_\lambda\right),
        \qquad a_\lambda,b_\lambda\in E_\lambda .
\]
The identity $C_\eps^\top a(t)=0$ for every $t$ implies, by linear independence of the functions $\cos(\lambda t)$ and $\sin(\lambda t)$ for the distinct positive frequencies $\lambda$, that
\[
        C_\eps^\top a_\lambda=0,
        \qquad
        C_\eps^\top b_\lambda=0
        \qquad\hbox{for every }\lambda^2\in\sigma(\Lambda).
\]
The modal coupling condition \eqref{eq:modal-observability} gives $a_\lambda=b_\lambda=0$ for every $\lambda$.  Hence $a\equiv0$ and, by \eqref{eq:undamped-a-proof}, $\eta\equiv0$.  The invariant subset of the zero-dissipation set is therefore $\{0\}$.
\newline
By Lemma \ref{lem:LH}, the first-order semigroup generated by \eqref{eq:stab-system} is exponentially stable.  Since $E_\eps$ is equivalent to the Euclidean energy, there exist $M_\eps\ge1$ and $\delta_\eps>0$ such that \eqref{eq:exp-stab} holds.
\newline
It remains to discuss the dependence on the small cluster-induced damping.  Let $\Gamma_\eps=\gamma_\eps\Gamma_0$ with $\Gamma_0>0$.  The first-order matrix associated with \eqref{eq:stab-system} depends analytically on $\gamma_\eps$.  At $\gamma_\eps=0$ the system is conservative and its spectrum lies on the imaginary axis.  The perturbation in the generator is linear in $\gamma_\eps$ and acts only on the collective velocities.  Therefore the spectral abscissa can move into the left half-plane only at the scale $O(\gamma_\eps)$.  For every fixed $\gamma_\eps>0$, the invariant-set argument above excludes eigenvalues on $\ii\R$; hence the spectral abscissa is strictly negative.  This proves that the exponential decay rate has the natural order
\[
        \delta_\eps=O(\gamma_\eps),
\]
which is \eqref{eq:rate-order}.
\end{proof}

\begin{remark}\label{Remark-poles-2}
The decay in \Cref{thm:stab} is generated by the real parts $\gamma_\alpha^\eps$ of the collective poles.  Resonant tuning affects stabilization in two ways: it increases the transfer of acoustic energy to the damped collective channels through the same frequency matching that appears in the multiplier $d_{\alpha,\eps}$, and it determines the size of the damping scale.  Since $\gamma_\alpha^\eps$ is small for small bubbles, the finite-band decay is weak, typically of order $O(\eps)$ in the cluster regimes considered in \cite{MukherjeeSiniControl2026}.
\end{remark}

\section{Conclusion}\label{con}

Starting from the bubbly acoustic transmission problem, the time-domain Foldy--Lax approximation gives
\[
        u^\eps-u^{\rm in}
        =\sum_iG_0(\cdot,z_i)q_i^\eps(\cdot-c_0^{-1}|\cdot-z_i|)+r^\eps,
\]
where $q^\eps$ solves the delayed system \eqref{eq:FL-time-expanded}.  The local-cluster geometry turns the Laplace-domain Foldy--Lax matrix into a block-dominant matrix, and Proposition 2.1 of \cite{MukherjeeSiniControl2026} provides principal collective poles
\[
        s_\alpha^\eps=-\gamma_\alpha^\eps+\ii\omega_\alpha^\eps .
\]
Projection onto the associated collective channels and onto a finite acoustic band yields the system
\[
        \ddot a+\Lambda a=C_\eps\eta,
        \qquad
        \ddot\eta+2\Gamma_\eps\dot\eta+K_\eps\eta=u.
\]
For the quadratic functional \eqref{eq:LQ-cost}, the minimizer exists uniquely and satisfies the adjoint system \eqref{eq:adjoint}--\eqref{eq:opt-control}.  The finite-band source-lifting estimate is
\[
        \|u_r\|_{L^2(0,T)}^2
        \le
        \sum_\alpha
        \left(\sup_{\nu\in I_\alpha}d_{\alpha,\eps}(\nu)\right)^2
        \| (\eta_r)_\alpha\|_{\mathcal B_T(I_\alpha)}^2,
\]
with
\[
        d_{\alpha,\eps}(\nu)=
        \left| (\omega_\alpha^\eps)^2+(\gamma_\alpha^\eps)^2-\nu^2
        +2\ii\gamma_\alpha^\eps\nu\right|.
\]
This gives the optimal-value bound \eqref{eq:optimal-upper}.  At frequency matching, $d_{\alpha,\eps}(\omega_\alpha^\eps)=2\gamma_\alpha^\eps\omega_\alpha^\eps+O((\gamma_\alpha^\eps)^3)$, which quantifies the finite resonant gain.  The same pole real parts also yield the finite-band stabilization identity \eqref{eq:dissipation} and, under the modal coupling condition \eqref{eq:modal-observability}, exponential decay with rate proportional to the cluster-induced damping scale.
\newline
The analysis therefore shows that the collective Minnaert poles play a double role.  Their imaginary parts determine the frequency windows in which the actuator channels couple efficiently to the selected acoustic modes, while their real parts determine both the finite gain in the source-lifting estimate and the stabilization scale.  In this sense the resonant microstructure is not treated as a passive scattering correction to an otherwise ideal actuator, but as part of the control mechanism itself.  The framework suggests a systematic route for designing finite-band wave controls in which the geometry and material parameters of the clusters are chosen together with the optimal input, rather than after the control problem has been idealized.
\medskip

\noindent An interesting feature of the present framework is that the same collective scattering poles govern both the efficiency of resonator-mediated actuation and the intrinsic stabilization of the reduced dynamics through their nonzero real parts.

\bigskip

\noindent
\textit{\textbf{Acknowledgments.}}
\\
\textit{\textbf{Data Availability Statement.}} Data sharing is not applicable to this article as no datasets were generated or analyzed during the current study.
\bigskip

\noindent
\textit{\textbf{Declarations.}}
\newline
\textit{\textbf{Conflict of interest.}} The authors declare that they have no conflict of interest.

\end{document}